\begin{document}
\theoremstyle{plain}
\newtheorem{thm}{Theorem}[section]
\newtheorem{theorem}[thm]{Theorem}
\newtheorem{addendum}[thm]{Addendum}
\newtheorem{lemma}[thm]{Lemma}
\newtheorem{corollary}[thm]{Corollary}
\newtheorem{proposition}[thm]{Proposition}

\newcommand{\mR}{\mathbb{R}}
\newcommand{\mZ}{\mathbb{Z}}
\newcommand{\mN}{\mathbb{N}}
\newcommand{\mC}{\mathbb{C}}
\newcommand{\mP}{\mathbb{P}}
\newcommand{\CO}{\mathcal{O}}
\newcommand{\CE}{\mathcal{E}}
\newcommand{\CF}{\mathcal{F}}
\newcommand{\CG}{\mathcal{G}}
\newcommand{\CL}{\mathcal{L}}
\newcommand{\CM}{\mathcal{M}}
\newcommand{\CP}{\mathcal{P}}
\newcommand{\CS}{\mathcal{S}}
\newcommand{\CA}{\mathcal{A}}
\newcommand{\CB}{\mathcal{B}}
\newcommand{\CC}{\mathcal{C}}
\newcommand{\CH}{\mathcal{H}}
\newcommand{\CI}{\mathcal{I}}
\newcommand{\CJ}{\mathcal{J}}
\newcommand{\CZ}{\mathcal{Z}}
\newcommand{\CN}{\mathcal{N}}
\newcommand{\CR}{\mathcal{R}}

\newcommand{\FA}{\mathfrak{A}}
\newcommand{\FB}{\mathfrak{B}}
\newcommand{\FC}{\mathfrak{C}}
\newcommand{\FD}{\mathfrak{D}}

\newcommand{\SA}{\mathscr{A}}
\newcommand{\SB}{\mathscr{B}}
\newcommand{\SC}{\mathscr{C}}
\newcommand{\SD}{\mathscr{D}}

\newcommand{\mr}{\mbox}
\newcommand{\I}{{\bf I}}
\newcommand{\J}{{\bf J}}
\newcommand{\bs}{{\bf s}}
\newcommand{\ts}{{\tilde s}}
\newcommand{\e}{{{\bf e}}}
\newcommand{\m}{{\bf m}}
\newcommand{\ord}{\mathrm{ord}}
\newcommand{\fb}{\mathfrak{b}}

\theoremstyle{definition}
\newtheorem{remark}[thm]{Remark}
\newtheorem{remarks}[thm]{Remarks}
\newtheorem{notations}[thm]{Notations}
\newtheorem{definition}[thm]{Definition}
\newtheorem{claim}[thm]{Claim}
\newtheorem{assumption}[thm]{Assumption}
\newtheorem{assumptions}[thm]{Assumptions}
\newtheorem{property}[thm]{Property}
\newtheorem{properties}[thm]{Properties}
\newtheorem{example}[thm]{Example}
\newtheorem{examples}{Examples}
\newtheorem{conjecture}[thm]{Conjecture}
\newtheorem{questions}[thm]{Questions}
\newtheorem{question}[thm]{Question}
\newtheorem{problem}[thm]{Problem}
\numberwithin{equation}{section}
 \newcommand{\Rnm}[1]{\uppercase\expandafter{\romannumeral #1}}

 \title{On the sharp lower bounds of  modular invariants and fractional Dehn twist coefficients}
\author{Xiao-Lei Liu~~~~Sheng-Li Tan}

\date{}

 \maketitle

{\bf Abstract} { Modular invariants of families of curves are Arakelov invariants in arithmetic algebraic geometry. All the known uniform lower bounds of these invariants are not sharp. In this paper, we aim to give explicit lower bounds of modular invariants of families of curves, which is sharp for genus 2. According to the relation between fractional Dehn twists and modular invariants, we give the sharp lower bounds of fractional Dehn twist coefficients and classify  pseudo-periodic maps with minimal  coefficients for genus 2 and 3 firstly.  We also obtain a rigidity property for families with minimal modular invariants, and other applications.}\\

{\bf{Keywords}} {lower bounds, fractional Dehn twists, modular invariants}\\

{\bf{MSC(2010)}} {14D06, 14H10, 57M50, 57M99}

\section{Introduction}

\subsection{Modular invariants}\label{sectBasicResult}

Without mention we always work on complex number field $\mathbb{C}$.  A {\it family} of projective curves of genus $g$ is a surjective holomorphic morphism $f:S\to C$ whose general fiber is a smooth curve of genus $g$, where $S$ is a smooth projective surface, and $C$ is a smooth projective curve of genus $b$. Let ${\CM_g}$ be the moduli space of smooth curves of genus $g$, and $\overline{\CM}_g$ be the Deligne-Mumford compactification of $\CM_g$.

 The intersection theory of divisors of $\overline{\CM}_g$ is very beautiful.  The intersection of rational divisor class $\gamma$ of $\overline{\CM}_g$ with curves $D\subset\overline{\CM}_g$ is also interested in the theory of birational geometry of $\overline{\CM}_g$.  Numerically, the intersection of $\gamma$ with $D$  can be regarded as the degree of $\gamma$ on $D$.

 The {\it modular invariant} of $f$ corresponding to $\gamma$ is defined as the degree $\gamma(f)=\mathrm{deg}J_f^*(\gamma)$,  where $J_f:C\to  \overline{\CM}_g$ is the induced moduli map (see \cite{Ta10}). The modular invariant $\gamma(f)$ satisfies the {\it base change property}, i.e., if
$\tilde f:\tilde S\to \tilde C$ is the pullback fibration of $f$ under a base change $\pi:\tilde C \to C$ of degree $d$, then
$\gamma(\tilde f)=d\cdot \gamma(f)$. Modular invariants are also important in many other mathematical branches, such as arithmetic geometry (\cite{Ja14}), low-dimensional topology (\cite{Liu}), and ordinary differential equations (\cite{Ta}). In particular, the modular invariants are generalized to be new  invariants, Chern numbers, of ordinary differential equations (\cite{Ta}) by the second author.

Let $\lambda$ be the Hodge divisor class of $\overline{\CM}_g$, $\delta$ be the boundary divisor class, and their corresponding modular invariants be $\lambda(f)$ and $\delta(f)$. We also denote by $\kappa(f)$ the modular invariant corresponding to $\kappa=12\lambda-\delta$. By stable reduction theorem (see \cite{Xi90}), we know that all these modular invariants are nonnegative rational numbers, and $\lambda(f)>0$ for non-isotrivial family $f$.
These kinds of modular invariants are called Arakelov invariants (\cite{Ja14}) in number field case, and the modular invariant $\lambda(f)$ is  Faltings height in particular.

The minimal uniform lower bounds for these invariants are interesting. In 1991, Mazur raised a question on the minimized Faltings heights of varieties in $\mathbb{P}^N$, which was studied by Zhang (\cite{Zh96}) partly.  Some uniform lower bounds for Faltings $\delta$-invariant of curves are also obtained by Faltings et al (\cite{Fa84,Wi16}...). But all these known bounds are not sharp.

In this paper, we consider the above uniform lower bounds problem in the case of curves over function fields. Our goal is to get sharp lower bounds depending only on $g$ and characterize families with these minimal lower bounds.

When $g=1$, then the best lower bounds are $\kappa(f)=0$, $\lambda(f)\geq\frac1{12}$ and $\delta(f)\geq1$.  We believe that these bounds of modular invariants are known to experts. Hence we only considered $g\geq 2$.

For $g=2$, we have the following sharp lower bounds of modular invariants.

\begin{theorem}\label{thmlambdakappa}
Let $f:S\to C$ be a non-isotrivial fibration of genus 2, then
\begin{equation}\label{equationlambda}
\lambda(f)\geq\frac{1}{60},~ \kappa(f)\geq\frac{1}{15},~\delta(f)\geq\frac{1}{12},
\end{equation}
and each equality can be reached. Furthermore,

1)  $\lambda(f)=\frac{1}{60}$ if and only if $\delta(f)=\frac{1}{12}$
if and only if all the singular fibers of $f$ have smooth reduction except one whose dual graph is either Figure (2-1a) or Figure (2-1b).

{\upshape
\begin{center}
\begin{tikzpicture}
  [inner sep=0.5mm,
  place/.style={circle,draw}]
  \node[place] (v1) at (0.8,0.8) [label=above:3]  {};
    \node[place] (v2) at (1.6,0.8) [label=above:4]  {};
    \node[place] (v3) at (0,1.2) [label=above:1]  {};
  \node[place] (v4) at (0,0.4) [label=above:1]  {};
    \node[place] (v5) at (2.4,1.2) [label=above:2]  {};
  \node[place] (v6) at (2.4,0.4) [label=above:3]  {};
   \node[place] (v7) at (3.2,0.4) [label=above:2]  {};
    \node[place] (v8) at (4.0,0.4) [label=above:1]  {};
  \draw (v3)--(v1)--(v2)--(v5);\draw (v4)--(v1)-- (v2)--(v6)--(v7)--(v8);
  \draw[very thick] (v1)--(v2);
    \node [below=4pt] at (v2)  {\tiny $C_{v_2}$};
          \node [below=4pt] at (v1)  {\tiny $C_{v_1}$};
   \node at (1.2,0.5) [below=8pt] {(2-1a)};
\end{tikzpicture}
~~~~~~~~
\begin{tikzpicture}
  [inner sep=0.5mm,
  place/.style={circle,draw}]
  \node[place] (v1) at (0.8,0.8) [label=above:6]  {};
  \node[place] (v3) at (1.6,0.8) [label=above:5]  {};
    \node[place] (v2) at (2.4,0.8) [label=above:4]  {};
  \node[place] (v5) at (-0.8,0.4) [label=above:2]  {};
    \node[place] (v4) at (0,1.2) [label=above:3]  {};
  \node[place] (v6) at (0,0.4) [label=above:4]  {};
    \node[place] (v7) at (3.2,1.2) [label=above:2]  {};
  \node[place] (v8) at (3.2,0.4) [label=above:1]  {};
  \draw (v4)-- (v1)--(v3)--(v2)--(v7);\draw (v5)--(v6)--(v1)--(v3)-- (v2)--(v8);
  \draw[very thick] (v1)--(v3)--(v2);
    \node [below=4pt] at (v2)  {\tiny $C_{v_2}$};
          \node [below=4pt] at (v1)  {\tiny $C_{v_1}$};
   \node at (1.6,0.5) [below=8pt] {(2-1b)};
\end{tikzpicture}
\end{center}}

2) $\kappa(f)=\frac1{15}$ if and only if all the singular fibers of $f$ have smooth reduction except one whose dual graph is either Figure (2-0a) or Figure (2-0b).

\vspace{1mm}
{\upshape
\begin{center}
  \begin{tikzpicture}
  [  place/.style={circle,draw,inner sep=0.5mm},
  place2/.style={circle,draw,fill,inner sep=0.2mm},]
  \node[place] (v1) at (0.8,0.8) [label=above:2]  {};
    \node[place] (v2) at (1.6,0.8) [label=above:6]  {};
    \node[place] (v3) at (0,1.2) [label=above:1]  {};
  \node[place] (v4) at (0,0.4) [label=above:1]  {};
  \node[place] (v5) at (2.4,1.2) [label=above:1]  {};
   \node[place] (v6) at (2.4,0.4) [label=above:3]  {};
  \draw (v3)-- (v1)--(v2)--(v5);\draw (v4)--(v1)--(v2)--(v6);
   \node at (1.2,0.5) [below=8pt] {(2-0a)};
    \draw[very thick]  (v1)  to (v2); \node [below=4pt] at (v2)  {\tiny $C_{v}$};
\end{tikzpicture}
~~~~~~~~~~
\begin{tikzpicture}
  [inner sep=0.5mm,
  place/.style={circle,draw}]
  \node[place] (v1) at (0.8,0.8) [label=above:3]  {};
    \node[place] (v2) at (1.6,0.8) [label=above:2]  {};
  \node[place] (v3) at (0,0.8) [label=above:2]  {};
   \node[place] (v4) at (-0.8,0.8) [label=above:1]  {};
  \draw[very thick]  (v1) to [bend left=15]   (v2);
  \draw[very thick] (v1) to [bend right=15]  (v2);
  \draw (v4) to (v3) to (v1); \node [below=4pt] at (v1)  {\tiny $C_{v}$};
  \node at (0.4,0.5) [below=8pt] {(2-0b)};
\end{tikzpicture}
\end{center}
}

\end{theorem}

We prove that the lower bounds in Theorem \ref{thmlambdakappa} are optimum by giving examples in Section \ref{sectionexistence}.

\begin{theorem}\label{thmexistence}
There exists a family of fibrations $(f_{\lambda,n}:S_{\lambda,n}\to \mP^1)_{n\in\mN}$ (resp. $(f_{\kappa,n}:S_{\kappa,n}\to \mP^1)_{n\in\mN}$) of genus 2 with $\lambda(f_{\lambda,n})=\frac1{60},\delta(f_{\lambda,n})=\frac1{12}$ (resp. $\kappa(f_{\kappa,n})=\frac1{15},~\lambda(f_{\kappa,n})=\frac1{30},~\delta(f_{\kappa,n})=\frac1{3}$),  satisfying that

1) $f_{\lambda,n}$ (resp. $f_{\kappa,n}$) has $2n+3$ singular fibers;

2) the  image of $f_{\lambda,n}$ (resp. $f_{\kappa,n}$) in $\overline{\CM}_g$ by the moduli  map $J:\mP^1\to \overline{\CM}_g$ is the same as that of $f_{\lambda,0}$ (resp. $f_{\kappa,0}$), for each $n\in\mN$.

Moreover,  if $n=0$, then $S_{\lambda,0}$ and $S_{\kappa,0}$ are both rational surfaces.

\end{theorem}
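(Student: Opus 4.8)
The plan is to handle the two cases by the same mechanism. First I would construct, for each case, one explicit ``model'' fibration $f_{\lambda,0}\colon S_{\lambda,0}\to\mP^1$ (resp.\ $f_{\kappa,0}$) whose only fibre that does not have smooth reduction is the extremal fibre of Theorem \ref{thmlambdakappa}, with $S_{\lambda,0}$ (resp.\ $S_{\kappa,0}$) rational; then I would obtain the whole sequence $(f_{\lambda,n})_{n\in\mN}$ from it by quadratic twists that leave the moduli map unchanged. For the $\lambda$-family the values $\lambda(f_{\lambda,n})=\tfrac1{60}$ and $\delta(f_{\lambda,n})=\tfrac1{12}$ then follow directly from part (1) of Theorem \ref{thmlambdakappa}; for the $\kappa$-family one gets $\kappa(f_{\kappa,n})=\tfrac1{15}$ from part (2), then $\delta(f_{\kappa,n})=\tfrac13$ because the only fibre meeting $\partial\overline{\CM}_2$ is the stated $\delta_0$-fibre, whose local $\delta$-contribution is $\tfrac13$, and finally $\lambda(f_{\kappa,n})=\tfrac1{12}(\delta+\kappa)=\tfrac1{30}$. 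So in every case the real work is to build the model and then to check that each member of the sequence still has exactly one non-smooth-reduction fibre, of the required type.

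For the model $f_{\lambda,0}$ I would take $S_{\lambda,0}$ to be the smooth relatively minimal model of a double cover $\varphi\colon S_{\lambda,0}\to W$ of a Hirzebruch surface $W=\mathbb F_e$ (or of $\mP^1\times\mP^1$), branched along an explicit curve $B\in|2L|$ with $L\sim 3\sigma+m\,\ell$, where $\sigma$ is a section and $\ell$ a fibre of $W\to\mP^1$; a general fibre of $S_{\lambda,0}\to\mP^1$ (induced by $W\to\mP^1$) is then the genus-$2$ double cover of $\mP^1$ branched at the six points $B\cap\ell$. One chooses $B$ so that over a single point $t_0\in\mP^1$ the pair $(B,\ell_{t_0})$ acquires a prescribed singularity and is transverse to the fibres everywhere else; running the canonical resolution of $\varphi$ over $t_0$ (embedded resolution of $B$ together with the even-ification of the branch multiplicities) and contracting the vertical $(-1)$-curves then produces a fibre whose dual graph is exactly Figure 1(a) or 1(b), while every other fibre has smooth reduction. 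Taking the smallest $m$ compatible with this makes $\chi(\mathcal O_{S_{\lambda,0}})=1$; since the canonical resolution of $\varphi$ only inserts rational curves, a cohomology computation on $W$ gives $q(S_{\lambda,0})=0$ (hence $p_g(S_{\lambda,0})=0$, as $\chi=1$) and $P_2(S_{\lambda,0})=0$, so $S_{\lambda,0}$ is rational by Castelnuovo's criterion. The construction of $f_{\kappa,0}$ is identical, with the local model over $t_0$ chosen so that the minimal fibre there is Figure 2(a) or 2(b); its $\delta_0$-contribution is $\tfrac13$, which fixes $\delta(f_{\kappa,0})$ and $\lambda(f_{\kappa,0})$ as above.

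For $n\geq1$ I would set $f_{\lambda,n}$ to be the relatively minimal model of the quadratic twist of $f_{\lambda,0}$ by a rational function $g_n\in\mathbb C(\mP^1)^{*}$ with $n$ zeros and $n$ poles, all at general points of $\mP^1$: if $f_{\lambda,0}$ is the double cover $y^2=F(x,t)$, then $f_{\lambda,n}$ is $y^2=g_n(t)\,F(x,t)$. Over $\mathbb C(\mP^1)$ this curve becomes isomorphic to the generic fibre of $f_{\lambda,0}$ after adjoining $\sqrt{g_n}$, so $f_{\lambda,n}$ and $f_{\lambda,0}$ induce literally the same moduli map $J\colon\mP^1\to\overline{\CM}_2$; in particular $\lambda(f_{\lambda,n})$, $\delta(f_{\lambda,n})$ and $\kappa(f_{\lambda,n})$ take the same values as for $f_{\lambda,0}$. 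At each of the $2n$ zeros and poles of $g_n$ the fibre of $f_{\lambda,n}$ becomes a smooth genus-$2$ curve again after the double base change $\sqrt{g_n}$, so it has smooth reduction and does not meet $\partial\overline{\CM}_2$; none of these points is $t_0$, so $f_{\lambda,n}$ still has exactly one non-smooth-reduction fibre, of the required type, and Theorem \ref{thmlambdakappa} applies verbatim. Finally, the branch divisor of the double cover defining $f_{\lambda,n}$ is $2(L+n\ell)$, so the canonical-resolution formula for $\chi$ gives $\chi(\mathcal O_{S_{\lambda,n}})=\chi(\mathcal O_{S_{\lambda,0}})+2n=1+2n$, while $q(S_{\lambda,n})=0$ still; hence $p_g(S_{\lambda,n})=2n>0$ and $S_{\lambda,n}$ is not rational for $n\geq1$. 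Applying the same twists to $f_{\kappa,0}$ yields $(f_{\kappa,n})_n$.

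The main obstacle is the explicit step in the second paragraph: pinning down a singularity of $(B,\ell_{t_0})$---equivalently, a local monodromy datum---whose associated minimal genus-$2$ fibre is precisely Figure 1(a) or 1(b) (resp.\ Figure 2(a) or 2(b)), and which together with the (necessarily few) remaining potentially-good fibres keeps $\chi(\mathcal O)$ as small as possible; this is a finite but delicate case analysis, in the spirit of the Namikawa--Ueno and Horikawa classifications of genus-$2$ fibres and of the canonical resolution of double covers. A secondary technical point is to verify that the twisting in the third paragraph neither merges with nor perturbs the unique extremal fibre and preserves $q=0$; both follow from placing the zeros and poles of $g_n$ in general position away from $t_0$ and from the vanishing $h^1(W,\mathcal O_W(-L))=0$ on a rational ruled surface.
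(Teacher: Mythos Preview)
Your outline is the same strategy the paper uses: realise $f_{\lambda,0}$ and $f_{\kappa,0}$ as double covers of $\mP^1\times\mP^1$ branched along an explicit sextic-in-$x$ curve (plus one fibre $\Gamma_\infty$ to make the class even), and then produce the rest of the sequence by adding $2n$ generic fibres to the branch locus---your ``quadratic twist'' is exactly this operation, and your argument that the moduli map is unchanged is the paper's argument too. The paper proves rationality slightly differently (it computes $K_{f}^2=5<6$, which forces the surface to be ruled by \cite{TTZ05}, and then uses $q_f=0$ from Theorem~\ref{thmlambdakappa}), but your route via Castelnuovo would also work.

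There is, however, a genuine error in your description of the base case. You say one chooses $B$ so that it ``is transverse to the fibres everywhere else'' and that consequently ``every other fibre has smooth reduction''. But transversality of $B$ to $\ell_t$ means six simple branch points, hence a \emph{smooth} genus-$2$ fibre, not merely one with smooth reduction; your $f_{\lambda,0}$ would then be a non-isotrivial pencil over $\mP^1$ with a single singular fibre, contradicting Beauville's lower bound of three. In the paper's actual construction the branch curve is \emph{not} transverse elsewhere: over two further points ($t=-1$ and $t=\infty$ in the $\lambda$-case, $t=0$ and $t=\infty$ in the $\kappa$-case) it acquires milder tangencies that produce Namikawa--Ueno fibres of types $[\mathrm{\Rnm8}$-$1]$ and $[\mathrm{\Rnm2}]$ (resp.\ $[\mathrm{\Rnm9}$-$1]$ and $[\mathrm{\Rnm2}]$), all of which have smooth reduction. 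Your closing paragraph gestures at ``the (necessarily few) remaining potentially-good fibres'', so you are half-aware of this, but the second paragraph must be rewritten to allow for them. The substantive content of the paper's proof is precisely the step you flag as the obstacle: it writes down explicit polynomials $h(x,t)$ and $g(x,t)$ of bidegree $(6,3)$ and verifies, via a relative-ramification count $r(R)=30$, that the three singular fibres are of exactly the required types and that no further singular fibres occur.
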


Furthermore, if the fibred surface is rational, then we have the following rigidity property.
\begin{theorem}\label{thmrigidity}
 There are only finitely many  fibrations $f:S\to C$ of genus 2 such that $S$ is a rational surface $S$, and $\lambda(f)=\frac1{60}$.
\end{theorem}

For  $g=3$, we have the following results.

\begin{theorem}\label{thmg3mod}
  Let $f:S\to C$ be a fibration of genus 3 with $\delta(f)\neq 0$, then
$$\lambda(f)\geq\frac1{105},~~~\delta(f)\geq \frac1{30},~~~\kappa (f)\geq\frac8{315}.$$
Moreover,  $\delta(f)=\frac1{30}$ if and only if all singular fibers of $f$ have periodic monodromy except one whose dual graph is one of figures (3-1a), (3-1b) and (3-1c) as follows.

\vspace{2mm}
{\upshape
\begin{center}
\begin{tikzpicture}
  [inner sep=0.5mm,
  place/.style={circle,draw}]
  \node[place] (v1) at (0.8,0.8) [label=above:10]  {};
    \node[place] (v2) at (1.6,0.8) [label=above:3]  {};
  \node[place] (v3) at (0,1.2) [label=above:5]  {};
  \node[place] (v4) at (0,0.4) [label=above:2]  {};
    \node[place] (v5) at (2.4,1.2) [label=above:1]  {};
  \node[place] (v6) at (2.4,0.4) [label=above:1]  {};
  \draw (v3)-- (v1)--(v2)--(v5);\draw (v4)--(v1);\draw (v2)--(v6);\draw[very thick] (v1)--(v2);
  \node [below=4pt] at (v2)  {\tiny $C_{v_2}$};
          \node [below=4pt] at (v1)  {\tiny $C_{v_1}$};
  \node at (1.2,0.5) [below=6pt] {(3-1a)};
\end{tikzpicture}
~~~~
\begin{tikzpicture}
  [inner sep=0.5mm,
  place/.style={circle,draw}]
  \node[place] (v1) at (0.8,0.8) [label=above:5]  {};
    \node[place] (v2) at (1.6,0.8) [label=above:6]  {};
  \node[place] (v3) at (-0.8,1.2) [label=above:1]  {};
  \node[place] (v5) at (-0.8,0.4) [label=above:1]  {};
    \node[place] (v4) at (0,1.2) [label=above:2]  {};
  \node[place] (v6) at (0,0.4) [label=above:2]  {};
    \node[place] (v7) at (2.4,1.2) [label=above:3]  {};
  \node[place] (v8) at (2.4,0.4) [label=above:4]  {};
   \node[place] (v9) at (3.2,0.4) [label=above:2]  {};
  \draw (v3)--(v4)-- (v1)--(v2)--(v7);\draw (v5)--(v6)--(v1)-- (v2)--(v8)--(v9);
  \draw[very thick] (v1)--(v2);
    \node [below=4pt] at (v2)  {\tiny $C_{v_2}$};
          \node [below=4pt] at (v1)  {\tiny $C_{v_1}$};
   \node at (1.2,0.5) [below=8pt] {(3-1b)};
\end{tikzpicture}
~~~~
\begin{tikzpicture}
  [inner sep=0.5mm,
  place/.style={circle,draw}]
  \node[place] (v1) at (0.8,0.8) [label=above:5]  {};
    \node[place] (v2) at (1.6,0.8) [label=above:6]  {};
  \node[place] (v5) at (-0.8,0.4) [label=above:1]  {};
    \node[place] (v4) at (0,1.2) [label=above:1]  {};
  \node[place] (v6) at (0,0.4) [label=above:3]  {};
    \node[place] (v7) at (2.4,1.2) [label=above:3]  {};
  \node[place] (v8) at (2.4,0.4) [label=above:4]  {};
   \node[place] (v9) at (3.2,0.4) [label=above:2]  {};
  \draw (v4)-- (v1)--(v2)--(v7);\draw (v5)--(v6)--(v1)-- (v2)--(v8)--(v9);
  \draw[very thick] (v1)--(v2);
    \node [below=4pt] at (v2)  {\tiny $C_{v_2}$};
          \node [below=4pt] at (v1)  {\tiny $C_{v_1}$};
   \node at (1.2,0.5) [below=8pt] {(3-1c)};
\end{tikzpicture}
\end{center}
}
\end{theorem}

  Now, we will show that when $g=3$, $\lambda(f)=\frac1{105}$ can be ``combinatorially reached".

  If all singular fibers of $f$ are $F_1,\cdots,F_s$, then we call ($F_1$,$\cdots$,$F_s$) the {\it configuration} of singular fibers of $f$. Denote by $F_a$ (resp. $F_b$) the singular fiber whose dual graph is Figure (3-1a) (resp. (3-1b)), and by (i2) the corresponding singular fiber in \cite[p.202]{AI02}. Then $F_a$ (resp. $F_b$) is {\it hyperelliptic},  that is, $F_a$ (resp. $F_b$) can be realized as a singular fiber of some hyperelliptic fibration (\cite[pp.19-20]{Is04}).
\begin{theorem}\label{thmg=3hyp}
   Let $S$ be a rational surface, and $f:S\to C$ be a hyperelliptic fibration of genus 3. Assume that either $F_a$ or $F_b$ is a singular fiber of $f$. Then $\lambda(f)=\frac1{105}$ if and only if the configuration of singular fibers of $f$  is one of the following:

{\upshape
   ~($F_{a}$,(i2),(i4)), ~~~~~~($F_{a}$,(i6),(i6)), ~~~~($F_{a}$,(i26),(i44)), ~~

($F_{a}$,(i26),(i45)), ~~($F_{a}$,(i26),(i46)), ~~($F_{a}$,(i26),(i47),(i47)), ~~~ ($F_{b}$,(i26),(i47)).
}

\end{theorem}

It is natural to consider which configuration in Theorem \ref{thmg=3hyp} can be realized. If $S$ is not rational, we will obtain more possible combinatorial configurations (see the proof of Theorem \ref{thmg=3hyp}), so we give the following conjecture.
\begin{conjecture}
  There exists a hyperelliptic fibration $f$ of genus 3 with $\lambda(f)=\frac1{105}$.
\end{conjecture}

For any $g\geq4$, similarly as $g=3$, it is possible to obtain the  sharp lower bound  of $\lambda(f)$, see Remark \ref{remg4}.
For uniform lower bounds for $g\geq4$, we have the following lower bounds depending only on $g$.
\begin{theorem}\label{thmg4mod}
Suppose $f:S\to C$ is a  fibration of genus $g\geq4$, and
$\delta(f)\neq0$, then
 $$\lambda(f)\geq\frac{1}{16g(2g+1)},~~
 \delta(f)\geq\frac1{4(g+1)^2},~~\kappa(f)\geq\frac{g-1}{4g^2(2g+1)}.$$
\end{theorem}

Let $\lambda(g)$ be the sharp lower bound of $\lambda(f)$ for non-isotrivial families of curves $f$ of genus $g$. From the above, we know that $\lambda(1)=\frac1{12}$, $\lambda(2)=\frac1{60}$. Since modular invariants are heights in arithmetic algebraic geometry,  we raise the following effective question which relates to finiteness of points on curves.
\begin{question}
 Is there a positive real number $r_0>0$ with
$$\liminf_{g\to\infty}\lambda(g)\geq r_0?$$
\end{question}

\subsection{Fractional Dehn twist coefficients}\label{subsectInt-FDTC}

It is proved that the modular invariant $\delta(f)$ is a summation of fractional Dehn twist coefficients (\cite{Liu}). So to get results in Section \ref{sectBasicResult}, we need sharp lower bounds of fractional Dehn twist coefficients, which is an interesting problem in low-dimensional topology.

It is known that Dehn twists are the generators of the mapping class group, and fractional Dehn twist coefficients are also important in 3-manifolds. These coefficients were first studied by Gabai and Oertel in \cite{GO89}, and then applied in many aspects  (\cite{HKM07,HM18}...). The bounds of these coefficients are studied in many different contexts, see \cite[Theorem 1]{HM18}, \cite[Section 7]{IK12}, \cite[Theorem 2.16]{KR13},\cite[Theorem 1.5]{Liu}.

For our purpose, we consider fractional Dehn twists coefficents in pseudo-periodic maps, and  try to give their sharp uniform lower bounds which depends only on $g$. Before we state our results, we will introduce some notations first.

Let $\Sigma_g$ be a closed connected Riemann surface of genus $g\geq2$. The mapping class group $\mbox{Mod}(\Sigma_g)$ of $\Sigma_g$ is the group of isotopy classes of orientation preserving homeomorphism of $\Sigma_g$.  The Nielsen-Thurston classification theorem says that any mapping class $\phi\in \mbox{Mod}(\Sigma_g)$ is either periodic, pseudo-Anosov, or reducible. The homeomorphism $\phi$ is reducible if there exist  finite simple closed curves $\SC=\{\gamma_1,\ldots,\gamma_r\}$ on $\Sigma_g$ such that the restriction of $\phi$ on  $\Sigma_g-\SC$ is either periodic or pseudo-Anosov. If $\phi\in \mathrm{Mod}(\Sigma_g)$ is periodic, or $\phi$ is reducible and the restriction is periodic, then $\phi$ is said to be {\it pseudo-periodic}. We may assume $\SC$ satisfies the following additional conditions: (i) $\gamma_i$ does not bound a disk on $\Sigma_g$, and (ii) $\gamma_i$ and $\gamma_j$ are disjoint, and $\gamma_i$ is not parallel to $\gamma_j$ if $i\neq j$ (\cite[Lemma 1.1]{MM11}). Such $\SC$ is called an {\it admissible system} of cut curves.

  Given a pseudo-periodic map $\phi$, a sufficiently high power $\phi^m$ preserves each cut curve $\gamma_1,\ldots,\gamma_r$.   Denote by $T_{\gamma_i}$ the (right-hand) Dehn twist of $\Sigma_g$ along $\gamma_i$,  then there is a factorization of $\phi$ into a  commutative product
  $\phi^m=T_{\gamma_1}^{k_1}\cdots T_{\gamma_r}^{k_r}.$
 The {\it fractional Dehn twist coefficient} of $\phi$ along $\gamma_i$ is defined to be
  $c(\phi,\gamma_i)=k_i/m$ (see \cite[Section 2.2.2]{Li17}).

 If $\phi\in {\mathrm{Mod}}(\Sigma_g)$ is a pseudo-periodic map {\it  of negative twist}, that is, $c(\phi,\gamma)<0$ for each $\gamma\in\SC$, then there exists a local family $f_\phi:S\to\Delta$ whose monodromy homeomorphism around its central fiber is equal (up to isotopy and conjugation) to $\phi$ (\cite{MM11,Im09, Ta01}). Here, the local family $f_\phi:S\to\Delta$ means a  proper surjective holomorphic map from a complex surface $S$ to the unit disk of the complex plane $\Delta$, and only the central fiber $F_\phi=f_\phi^{-1}(0)$ over the origin is singular. We also call $F_\phi$ the singular fiber of $\phi$.

It is known that the topological types of local families $f$ of genus $g\geq2$ are 1-1 correspondent to the conjugacy classes of pseudo-periodic maps $\phi$ of negative twist \cite[Theorem 0.2]{MM11}. Almost all the topological types of local families can be determined by dual graphs of their central fibers. (It is easy to check that the dual graphs in this paper determine topological types of the corresponding local families.)  Hence we denote the conjugacy class of $\phi$ by the dual graph $G(F_\phi)$ of $F_\phi$ for simplicity.

Now we give the sharp lower bounds of fractional Dehn twist coefficients ($|c(\phi,\gamma)|$ in fact), and classify the pseudo-periodic maps with these bounds.

\begin{theorem}\label{thmFDCT}
  Let $\phi\in {\mathrm{Mod}}(\Sigma_g)$ be a pseudo-periodic map of negative twist, and $\gamma\in\SC$.

(1) If $g=2$, then
$$|c(\phi,\gamma)|\geq \frac1{12},$$
and the equality holds if and only if $(G(F_\phi),\gamma)$ is either Figure (2-1a) or Figure (2-1b).

In each figure, we use thick edges  to denote $\gamma$ using the correspondence in \cite[Theorem 4.2]{Liu}.

(2) If $g=3$, then
$$|c(\phi,\gamma)|\geq \frac1{30},$$
and the equality holds if and only if $(G(F_\phi),\gamma)$ is one of figures (3-1a), (3-1b) and (3-1c).
\end{theorem}

Moreover, we can obtain more precise results.

If $\gamma$ is a non-separated cut curve, then $\gamma$ is said to be {\it of type 0}. If $\gamma$ is separated, and the least genus  of the two connected components is $i\geq1$, then $\gamma$ is said to be {\it of type $i$}.

\begin{theorem}\label{thmFDCTi}
  Let $\phi\in {\mathrm{Mod}}(\Sigma_g)$ be a pseudo-periodic map of negative twist, and $\gamma\in\SC$. Then the sharp lower bounds $c$ of $|c(\phi,\gamma)|$ are as follows.

\begin{center}
\begin{tabular}{|c|c|c|}

  \hline
\diagbox{$g$}{$c$}{\mbox{type}} & \mbox{type~ 0} & \mbox{type~1}\\
  \hline
  {\footnotesize$g=2$} & {\footnotesize$\frac13$} & {\footnotesize$\frac1{12}$}\\
  \hline
  {\footnotesize$g=3$} & {\footnotesize$\frac1{12}$} & {\footnotesize$\frac1{30}$}\\
  \hline
\end{tabular}
\end{center}

Furthermore,

 {\upshape(2-0)} if $g=2$, $\gamma$ is of type 0, then $|c(\phi,\gamma)|= \frac1{3}$ if and only if $(G(F_\phi),\gamma)$ is one of the four figures: Figure (2-0a) - Figure (2-0d).

 {\upshape
\begin{center}
\begin{tikzpicture}
  [  place/.style={circle,draw,inner sep=0.5mm},
  place2/.style={circle,draw,fill,inner sep=0.2mm},]

  \node[place] (v1) at (0.8,0.8) [label=above:3]  {};
    \node[place] (v2) at (1.6,0.8) [label=above:3]  {};
  \node[place] (v5) at (-0.8,0.4) [label=above:1]  {};
    \node[place] (v4) at (0,1.2) [label=above:1]  {};
  \node[place] (v6) at (0,0.4) [label=above:2]  {};
  \node[place] (v8) at (2.4,1.2) [label=above:1]  {};
    \node[place] (v10) at (3.2,0.4) [label=above:1]  {};
   \node[place] (v9) at (2.4,0.4) [label=above:2]  {};
  \draw (v4)-- (v1)--(v2)--(v8);\draw (v5)--(v6)--(v1); \draw  (v2)--(v9)--(v10);
   \node at (1.2,0.5) [below=8pt] {(2-0c)};
   \draw[very thick]  (v1)  to (v2); \node [below=4pt] at (v2)  {\tiny $C_{v_2}$}; \node [below=4pt] at (v1)  {\tiny $C_{v_1}$};
\end{tikzpicture}
~~~~~~
\begin{tikzpicture}
  [  place/.style={circle,draw,inner sep=0.5mm},
  place2/.style={circle,draw,fill,inner sep=0.2mm},]

  \node[place] (v1) at (0.8,0.8) [label=above:6]  {};
    \node[place] (v2) at (1.6,0.8) [label=above:6]  {};
  \node[place] (v5) at (-0.8,0.4) [label=above:2]  {};
    \node[place] (v4) at (0,1.2) [label=above:2]  {};
  \node[place] (v6) at (0,0.4) [label=above:4]  {};
  \node[place] (v8) at (2.4,1.2) [label=above:3]  {};
   \node[place] (v9) at (2.4,0.4) [label=above:3]  {};
  \draw (v4)-- (v1)--(v2)--(v8);\draw (v5)--(v6)--(v1); \draw  (v2)--(v9);
   \node at (1.2,0.5) [below=8pt] {(2-0d)};
   \draw[very thick]  (v1)  to (v2);  \node [below=4pt] at (v1)  {\tiny $C_{v_1}$};  \node [below=4pt] at (v2)  {\tiny $C_{v_2}$};
\end{tikzpicture}
\end{center}}

{\upshape(2-1)} if $g=2$, $\gamma$ is of type 1, then $|c(\phi,\gamma)|= \frac1{12}$ if and only if $(G(F_\phi),\gamma)$ is either Figure (2-1a) or Figure (2-1b).

{\upshape(3-0)} if $g=3$, $\gamma$ is of type 0, then $|c(\phi,\gamma)|= \frac1{12}$ if and only if $(G(F_\phi),\gamma)$ is one of the following three figures
\vspace{0mm}

{\upshape
\begin{center}
\begin{tikzpicture}
  [inner sep=0.5mm,
  place/.style={circle,draw}]
  \node[place] (v1) at (0.8,0.8) [label=above:4]  {};
    \node[place] (v2) at (1.6,0.8) [label=above:3]  {};
  \node[place] (v3) at (0,0.8) [label=above:2]  {};
  \node[place] (v4) at (2.4,0.8) [label=above:1]  {};
  \draw[very thick]  (v1) to [bend left=15]   (v2);
  \draw (v1) to [bend right=15]  (v2);
  \draw (v3) to (v1);\draw (v2) to (v4);
\node [below=4pt] at (v2)  {\tiny $C_{v_2}$}; \node [below=4pt] at (v1)  {\tiny $C_{v_1}$};
  \node at (1.2,0.5) [below=8pt] {(3-0a)};
\end{tikzpicture}
~~
\begin{tikzpicture}
  [  place/.style={circle,draw,inner sep=0.5mm},
  place2/.style={circle,draw,fill,inner sep=0.2mm},]
  \node[place2]  at (-0.6,0.4)  {}; \node[place2]  at (-0.4,0.4)  {};
  \node[place2]  at (-0.2,0.4)  {};
  \node[place] (v1) at (0.8,0.8) [label=above:6]  {};
    \node[place] (v2) at (1.6,0.8) [label=above:4]  {};
  \node[place] (v5) at (-0.8,0.4) [label=above:2]  {};
    \node[place] (v4) at (0,1.2) [label=above:3]  {};
  \node[place] (v6) at (0,0.4) [label=above:5]  {};
    \node[place] (v7) at (-1.6,0.4) [label=above:1]  {};
  \node[place] (v8) at (2.4,1.2) [label=above:1]  {};
   \node[place] (v9) at (2.4,0.4) [label=above:1]  {};
  \draw (v4)-- (v1)--(v2)--(v8);\draw (v7)--(v5);\draw (v6)--(v1); \draw  (v2)--(v9);
   \node at (1.2,0.5) [below=8pt] {(3-0b)};
   \draw[very thick]  (v1)  to (v2); \node [below=4pt] at (v2)  {\tiny $C_{v_2}$}; \node [below=4pt] at (v1)  {\tiny $C_{v_1}$};
\end{tikzpicture}
~~
\begin{tikzpicture}
  [  place/.style={circle,draw,inner sep=0.5mm},
  place2/.style={circle,draw,fill,inner sep=0.2mm},]
  \node[place2]  at (-0.6,0.4)  {}; \node[place2]  at (-0.4,0.4)  {};
  \node[place2]  at (-0.2,0.4)  {};
  \node[place] (v1) at (0.8,0.8) [label=above:6]  {};
    \node[place] (v2) at (1.6,0.8) [label=above:4]  {};
  \node[place] (v5) at (-0.8,0.4) [label=above:2]  {};
    \node[place] (v4) at (0,1.2) [label=above:3]  {};
  \node[place] (v6) at (0,0.4) [label=above:5]  {};
    \node[place] (v7) at (-1.6,0.4) [label=above:1]  {};
  \node[place] (v8) at (2.4,1.2) [label=above:3]  {};
   \node[place] (v9) at (2.4,0.4) [label=above:3]  {};
   \node[place] (v10) at (3.2,1.2) [label=above:2]  {};
   \node[place] (v11) at (3.2,0.4) [label=above:2]  {};
   \node[place] (v12) at (4,1.2) [label=above:1]  {};
   \node[place] (v13) at (4,0.4) [label=above:1]  {};
  \draw (v4)-- (v1)--(v2)--(v8);\draw (v7)--(v5);\draw (v6)--(v1); \draw  (v2)--(v9);\draw (v8)--(v10)--(v12); \draw (v9)--(v11)--(v13);
   \node at (1.2,0.5) [below=8pt] {(3-0c)};
    \draw[very thick]  (v1)  to (v2); \node [below=4pt] at (v2)  {\tiny $C_{v_2}$}; \node [below=4pt] at (v1)  {\tiny $C_{v_1}$};
\end{tikzpicture}

\end{center}}

{\upshape(3-1)} if $g=3$, $\gamma$ is of type 1, then $|c(\phi,\gamma)|= \frac1{30}$ if and only if  $(G(F_\phi),\gamma)$ is one of figures { (3-1a)}, { (3-1b)} and { (3-1c)}.

\end{theorem}

Remark that there are two edges between $C_{v_1}$ and $C_{v_2}$ in Figure (3-0a), and each edge corresponds to a  cut curve $\gamma$ satisfying Theorem \ref{thmFDCTi} (3-0). We only label one by a thick line for simplicity. For comparison, there are two edges between $C_v$ and the vertex $C'$ on the right of $C_v$ in Figure (2-0b). It is easy to see that $C'$  does not correspond to a connected component of $\Sigma_g-\SC$, and the two edges together correspond to a cut curve (\cite{Liu}).

For general $g\geq4$, there is a uniform lower bound of fractional Dehn twist coefficients in \cite[Theorem 1.5]{Liu} which is not sharp.

\subsection {Effective Bogomolov conjecture}
Though we have given a uniform lower bound of the effective Bogomolov conjecture for general $g$, see \cite{LT17}. We now give a better bound for $g=2,3$.

Fix an algebraically closed field $k$ of characteristic zero and a
smooth proper connected curve $Y/k$. Define $K$ to be the field of
rational functions on $Y$. Let $C$ be a smooth proper geometrically
connected curve of genus at least 2 over the function field $K$. Denote by $f:X\to Y$ the minimal regular model of the curve $C$ over $Y$, where $X$ is a smooth projective surface over $k$.
Choose a divisor $D$ of degree 1 on $\bar C=C\times_K\bar K$ and
consider the embedding of $C$ into its Jacobian
$\mbox{Jac}(\bar C)=\mbox{Pic}^0(\bar C)$ given on geometric points by
$j_D(x)=[x]-D$. Define
$$a'(D)=\liminf_{x\in C(\bar{K})}\hat{h}(j_D(x)),$$
where $\hat h$ is the canonical N\'eron-Tate height on the Jacobian associated to the symmetric ample divisor $\Theta+[-1]^*\Theta$. As $C(\bar K)$ may not be countable, the liminf is taken to mean the limit over the directed set of all cofinite subsets of $C(\bar K)$ of the infimum of the heights of points in such a subset.

\begin{theorem}\label{thmBogo}
Let $C/K$ be a smooth proper geometrically connected curve of genus $g
\geq2$, if the semistable reduction of $C$ is not smooth,  then
$$\inf_{D\in\mr{Div}^1(\bar{C})}a'(D)\geq
\begin{cases}
\frac{1}{2280}, & g=2;\\
\frac{1}{3276}, & g=3.
\end{cases}$$
\end{theorem}
Remark that the bounds given in \cite{LT17} are $\frac1{12160}$ ~for $g=2$ and $\frac1{19656}$ for $g=3$.

The organization of this paper is as follows.

In \S \ref{sectprel}, we give notations of modular invariants $\delta_i(f)$ and valencies of periodic maps, which will be used in our proofs. In \S\ref{sectFDCTbounds}, we obtain the sharp lower bounds of frational Dehn twist coefficients (Theorem \ref{thmFDCT} and Theorem \ref{thmFDCTi}), using the theory of classification of singular fibers in \cite{AI02}. We divide Theorem \ref{thmFDCTi} into four parts in Section \ref{sectFDCTbounds}. Theorem \ref{thmFDCT} is in fact a  corollary of Theorem \ref{thmFDCTi}. By the correspondence in \cite{Liu},  we obtain sharp lower bounds of $\delta_i(f)$.  As an immediate application, we prove Theorem \ref{thmBogo} at the end of  \S \ref{sectFDCTbounds}. In \S \ref{sectMaing=2}, we prove results of lower bounds of modular invariants of fibrations of genus 2. We prove Theorem \ref{thmlambdakappa} first, and then prove  the rigidity property (Theorem \ref{thmrigidity}),  using the theory of Chern numbers of fibers (see \cite{Ta10}). Theorem \ref{thmexistence},  the optimum of the lower bounds in Theorem \ref{thmlambdakappa}, is proved in \S \ref{sectionexistence}. We prove results (Theorem \ref{thmg3mod}, Theorem \ref{thmg=3hyp} and Theorem \ref{thmg4mod}) for fibrations of genus $g\geq3$ in \S\ref{sectMaing3}.

\section{Preliminaries}\label{sectprel}

\subsection{Modular invariants}
Let $\Delta_0,\Delta_1,\ldots,\Delta_{[g/2]}$ be the boundary divisors of $\overline{\CM}_g$,  $\delta_i$ be the $\mathbb{Q}$-divisor classes corresponding to $\Delta_i$, and $\delta_i(f)$ be the modular invariants corresponding to $\delta_i$. Then
\begin{equation}
  \delta(f)=\delta_0(f)+\delta_1(f)+\cdots+\delta_{[g/2]}(f).
\end{equation}

 If $g=1$, then $\delta(f)$ is the number of poles of the $J$-function of the family (see \cite{Li16} for generalization). When $g\geq2$, it is shown (\cite{Ta94,Ta96}) that $\lambda(f)=0$ if and only if $\kappa(f)=0$ if and only if $f$ is an isotrivial family. In this paper, we always assume that $f$ is non-isotrivial, then $\lambda(f)$ and $\kappa(f)$ are positive rational number.

Let $F$ be a singular fiber of $f$, and $\tilde F$ be its $d$-th semistable model (\cite[p.207]{LT17}). Let $p$ be a node of $\tilde F$. We say $p$ is of {\it type 0} if the normalization of $\tilde F$ at $p$ is connected. Otherwise, the normalization at $p$ has two connected components, and we say $p$ is of {\it type $i$}, where $i$ is the minimum of the arithmetic genera of the two components.  Denote by $\delta_i(\tilde F)$ the number of nodes of type $i$ in $\tilde F$, then we define
\begin{equation}\label{edeltaF}
\delta_i(F):=\frac{\delta_i(\tilde F)}d,~~~(i=0,1,\ldots,[g/2]),
\end{equation}
 which is independent of the choice of the semistable model $\tilde F$ of $F$.  Let $F_1,\ldots,F_s$ be all singular fibers of $f$. It is shown that, in \cite{LT17},
\begin{equation}\label{edeltaf}
\delta_i(f)=\delta_i(F_1)+\cdots+\delta_i(F_s),~~~i=0,1,\ldots,[g/2].
\end{equation}

 If we restrict $f$ to a neighborhood of $f(F)\in C$, we can get a local family $f_F$ whose dual graph is $G(F)$, and we denote by $\phi_F$ the pseudo-periodic map determined by $f_F$. On the other hand,  let $\phi\in\mathrm{Mod}(\Sigma_g)$ be a pseudo-periodic map of negative twist. Then, for each $i\geq0$, we have  (\cite[Theorem 1.2]{Liu})
 \begin{equation}\label{eqdeltai}
   \delta_i(F_\phi)=\delta_i(f_\phi)=\sum_{\gamma\in\SC_i}|c(\phi,\gamma)|,
 \end{equation}
 where $\SC_i=\{\gamma\in\SC: \gamma~\mbox{is~of~type~}i\}$. So, if $\delta(F_\phi)=0$, then $F_\phi$ has smooth  reduction, and $\phi$ has periodic monodromy.

\subsection{Valencies of  periodic maps}\label{subsectValency}
Let $\Sigma$ be a connected real 2-dimensional
manifold with or without boundary. When we emphasize its complex
structure, we call $\Sigma$ a Riemann surface.

Let $\phi:\Sigma\to \Sigma $ be a periodic  homeomorphism  of order $n\geq2$, and $p$ be a point on
$\Sigma$. There is a positive integer $m_p$ such that the
points $p,\phi(p),\ldots,\phi^{m_p-1}(p)$ are mutually distinct and
$\phi^{m_p}(p)=p$. If $m_p=n$, we call the point $p$ a {\it
simple point} of $\phi$, while if $m_p<n$, we call $p$ a {\it
multiple point} of $\phi$.

 Let $\gamma$ be a cut curve in $\SC$ and  $m=m_{\vec{\gamma}}$ be the smallest positive integer such that
$\phi^m(\vec{\gamma})=\vec \gamma$ (i.e., $\phi^{m}(\gamma)=\gamma$ as a set, and $\phi^m$
preserving the orientation of $\gamma$). The restriction of $\phi^m$ to $\vec
\gamma$ is a periodic map of order, say, $\lambda\geq1$.  Let $q$ be any point on  $\gamma$, and suppose that the
images of $q$ under the iteration of $\phi^m$ are ordered
$(q,\phi^{m\sigma}(q),\ldots,
\phi^{(\lambda-1)m\sigma}(q))$ viewed in the direction of $\vec \gamma$,
where $\sigma$ is an integer with $0\leq \sigma \leq \lambda-1$,
$\mathrm{gcd}(\sigma,\lambda)=1$, and $\sigma=0$ iff $\lambda=1$.
 The triple $(m, \lambda,\sigma)$ is called
the {\it valency} of $\vec \gamma$ with respect to $\phi$.

We define the {\it valency of a boundary curve} (i.e., a
connected component of the boundary $\partial \Sigma$) as its
valency with respect to $\phi$, assuming it has the orientation induced
by the surface $\Sigma$. The {\it valency of a multiple point} $p$
is defined to be the valency of the boundary curve $\partial D_p$, oriented from the outside of a disk neighborhood $D_p$ of $p$.

  Let $\Sigma$ be a surface of genus $g$ with $k$ boundary curves $\partial_1,\ldots,\partial_k$. Let $\phi:\Sigma\to\Sigma$ be an orientation-preserving homeomorphism which satisfies:

   (1) there is a disjoint union of simple closed curves $\SC=\amalg_{j=1}^r \gamma_j$ such that $\SC$ and $\partial\Sigma=\amalg_{j=1}^k\partial_j$ do not intersect each other,

   (2) $\Sigma-\SC$ is connected,

   (3) $\phi(\SC)=\SC$ and $\phi|_{\Sigma-\SC}$ is periodic.

  \noindent Then we can  extend $\phi$ to a periodic map on a closed surface $\tilde \Sigma$ easily (\cite[Lemma 1.2]{AI02}), and classify the valency data for each $(g,r,k)$ similarly to Lemma \ref{lemmaValencies} (see \cite[Section 2.2]{AI02}).

Suppose $\Pi:\Sigma\to \Sigma'$ is the $n$-fold
cyclic covering induced by $\phi$, where $\Sigma'$ is the quotient surface of $\Sigma$ with respect to $\phi$. Let $\{q_1,\ldots,q_l\}\subseteq \Sigma'$ be the
set of branch points. If $\tilde q_i$ is a point of the pre-image $\Pi^{-1}(q_i)$
of $q_i$, and let the valency of $\tilde q_i$ be $(m_i,\lambda_i,\sigma_i)$.
Then we know that $m_i$ is the number of points in $\Pi^{-1}(q_i)$ and $\lambda_i=n/m_i$. Since the valencies of points in $\Pi^{-1}(q_i)$ are the same,  we can define the valency of $q_i$ to be the valency of $\tilde q_i$.

For brevity's sake, if we have the data of valencies $(n/\lambda_i,\lambda_i,\sigma_i)~(1\leq i\leq l)$, we symbolically write ~$\sigma_1/\lambda_1+\cdots+\sigma_l/\lambda_l$ which is called the {\it total valency}. We also write the order $n$ of the map and the genus $g'$ of $\Sigma'$. However if $g'=0$, the genus is omitted. A periodic map can be represented by its total valency.  For the reader's convenience, we list the classification of periodic maps in \cite[Lemma 1.4]{AI02} here.

\begin{lemma}\label{lemmaValencies}
Non-identical conjugacy classes of periodic maps of closed surfaces of genus $1\leq g\leq 2$ are classified as follows:\\
{\upshape(i)}~$g=1$\\
\indent$(1)$~ $n=6; ~1/6+1/3+1/2,5/6+2/3+1/2.$\\
\indent$(2)$~ $n=4; ~1/4+1/4+1/2,3/4+3/4+1/2.$\\
\indent$(3)$~ $n=3; ~1/3+1/3+1/3,2/3+2/3+2/3.$\\
\indent$(4)$~ $n=2; ~1/2+1/2+1/2+1/2.$\\
\indent$(5)$~ $g'=1,~n$ is arbitrary and $\Pi:\Sigma\to\Sigma'$ is an unramified covering.\\
{\upshape(ii)}~$g=2$\\
\indent$(1)$~ $n=10; ~1/10+2/5+1/2,3/10+1/5+1/2,7/10+4/5+1/2,9/10+3/5+1/2.$\\
\indent$(2)$~ $n=8; ~1/8+3/8+1/2,5/8+7/8+1/2.$\\
\indent$(3)$~ $n=6; ~1/6+1/6+2/3,5/6+5/6+1/3,1/3+2/3+1/2+1/2.$\\
\indent$(4)$~ $n=5; ~1/5+1/5+3/5,1/5+2/5+2/5,2/5+4/5+4/5,3/5+3/5+4/5.$\\
\indent$(5)$~ $n=4; ~1/4+3/4+1/2+1/2.$\\
\indent$(6)$~ $n=3; ~1/3+1/3+2/3+2/3.$\\
\indent$(8)$~ $g'=1,~n=2, 1/2+1/2.$
\end{lemma}

\subsection{Representation of a pseudo-periodic map}\label{sectperiod}
Let $\phi:\Sigma_g\to\Sigma_g$ be a pseudo-periodic map, and $\SC=\{\gamma_1,\ldots,\gamma_r\}$ be the admissible system of cut curves. Then the restriction of $\phi$ on $\SB=\Sigma_g-\SC$ is isotopic to a periodic map.

Now we use a weighted graph $G_\SC$ to denote the decomposition $\Sigma=\SB\bigcup\SC$. A vertex $v$ in $G_\SC$ corresponds to a connected component $B_v$ of $\SB$, and an edge $e$ corresponds to a separated cut curve $\gamma_e$ in $\SC$, where $\gamma_e$ is adjacent to two connected components of $\SB$. We define the weight of a vertex $v$ to be $g(B_v)+\rho(v)$, where $g(B_v)$ is the genus of $B_v$, and $\rho(v)$ is the number of cut curves only adjacent to $B_v$.We use a small circle to denote a vertex, and the number inside the small circle  means $g(B_v)+\rho(v)$. We omit the number when it is zero.

  Note that a weighted graph may represent different decompositions. For example,
 the graph (II) in Lemma \ref{lemclassificationg=2} represents four types of decompositions, that is, the component corresponding to $v_1$ has genus $i_1$ and is adjacent to $1-i_1$ non-separating curves in $\SC$, and the component corresponding to $v_2$ has genus $i_2$ and is adjacent to $1-i_2$ non-separating curves in $\SC$ ($0\leq i_1\leq 1,~0\leq i_2\leq 1$).

The map $\phi:\Sigma_g=\SB\bigcup\SC\to \Sigma_g=\SB\bigcup\SC$ induces an automorphism $\sigma_\phi$ on the weighted graph $G_\SC$. Here an {\it automorphism}  of $G_\SC$ means an automorphism of the graph such that the weight $(g(B_v),\rho(v))$ coincides with $(g(B_{\sigma(v)}),\rho(\sigma(v)))$ for each vertex $v$ of $G_\SC$, see \cite[Section 3.3]{AI02} for an example.

For each cut curve $\gamma\in\SC$, there exists a minimal positive integer
$\alpha$ such that $\phi^{\alpha}(\vec{\gamma})=\vec{\gamma}$.  The curve $\gamma$ is said to be {\it amphidrome} if $\alpha$ is even and
$\phi^{\alpha/2}(\vec{\gamma})=-\vec{\gamma}$, (where $\vec\gamma$ and $-\vec\gamma$ denote the same $\gamma$ with the opposite directions assigned) and {\it non-amphidrome} otherwise. There exists a minimal positive integer $L$ such that the restriction of $\phi^{L}$ to an annulus of ${\gamma}$ is isotopic to a Dehn
twist of $e$ times $(e\in \mZ)$.   The rational number $e\alpha/L$ is called the {\it screw number} of $\phi$ about $\gamma$, and is denoted by $s(\gamma)$ (see \cite{Ni44}).
We may always assume that $s(\gamma)\neq0$ for each $\gamma\in\SC$ (see \cite[p.5]{MM11}).

For each $\gamma\in\SC$, denote by $m_\gamma$ the length of the cyclic orbit of $\gamma$ under the permutation caused by $\phi$, that is,
$$m_\gamma=\#\{\phi^k(\gamma):k\in \mathbb{N}\}.$$

Our classification is based on the following theorem in \cite{MM11}.
\begin{theorem}\label{thmMMphi}
  The conjugacy class of a pseudo-periodic map $\phi:\Sigma_g\to\Sigma_g$ of negative twist is determined by the following data: an admissible system $\SC$  of cut curves, the induced automorphism $\sigma_\phi$ of $G_\SC$, the screw numbers $s(\gamma)$ for each $\gamma\in\SC$, and the valency data of the periodic maps which stabilize the connected components of $\Sigma_g-\SC$.
\end{theorem}

 Now we give the formula of fractional Dehn twist coefficients in \cite[Theorem 4.5]{Liu} and the formula of screw number
 (see \cite[Section 2.1]{AI02}). Let $\SA_\gamma$ be the annular neighborhood of $\gamma$. Let $(m_1,\lambda_1,\sigma_1)$ and $(m_2,\lambda_2,\sigma_2)$ be the valencies of the two boundary curves of $\SA_\gamma$. If $\gamma$ is non-amphidrome,  then $m_1=m_2=m_\gamma$, and
\begin{equation}\label{eqFDCTnon-a}
   |c(\phi,\gamma)|=\frac{|s(\gamma)|}{m_\gamma}
=\frac1{m_\gamma}(\frac{\mu_1}{\lambda_1}+\frac{\mu_2}{\lambda_2}+K),
\end{equation}
where $K\geq-1$ is an integer, and $\mu_i$ are integers with $$\sigma_i\mu_i\equiv1\mod \lambda_i, ~~~~ 0\leq\mu_i\leq\lambda_i-1.$$
If $\gamma$ is amphidrome, then the two boundary curves have the same valency $(2m_\gamma,\lambda,\sigma)$ where $2m_\gamma=\alpha$, and
   \begin{equation}\label{eqFDCT-a}
     |c(\phi,\gamma)|=\frac{|s(\gamma)|}{2m_\gamma}
=\frac1{m_\gamma}(\frac{\mu}{\lambda}+K),
\end{equation}
where $K\geq0$ is an integer, and $\mu$ is an integer with $$\sigma\mu\equiv1\mod \lambda, ~~~~0\leq\mu\leq\lambda-1.$$

     Let  $\gamma$  be a cut curve adjacent to connected components $B_{v_1}$, $B_{v_2}$ with $B_{v_1}\neq B_{v_2}$.  Suppose that $\gamma_k:=\phi^k(\gamma)$  ($k=1,\ldots,{m_\gamma}$) are all adjacent to $B_{v_i}~(i=1,2)$.  Denote the valencies of the two boundary curves  of $\SA_\gamma$ by $(m_1,\lambda_{v_1,\gamma},\sigma_{v_1,\gamma})$ and $(m_2,\lambda_{v_2,\gamma},\sigma_{v_2,\gamma})$ respectively.
      If $a$ is the minimal positive integer such that $\phi^a(B_{v_i})=B_{v_i}$, then the restriction $\phi^a|_{B_{v_i}}$ of $\phi^a$ to $B_{v_i}$ is periodic. Let $n(B_{v_i})$ be the order of $\phi^a|_{B_{v_i}}$.

      If $\phi$ interchanges $B_{v_i}$ $(i=1,2)$, then $\phi(\vec{\gamma}_k)=-\vec{\gamma}_{k+1}~(k=1,\ldots,m_\gamma)$, where $\gamma_{m_\gamma+1}:=\gamma_1=\gamma$.  So $\phi^{m_\gamma}(\vec{\gamma})=(-1)^{m_\gamma}\vec{\gamma}$. Moreover, if $m_\gamma$ is even, then $\gamma$ is non-amphidrome and $m_{\vec{\gamma}}=m_\gamma$; if $m_\gamma$ is odd, then $\gamma$ is amphidrome and $m_{\vec{\gamma}}=2m_\gamma$. Since $\phi$ interchanges $B_{v_i}$,  $n(B_{v_i})=m_{\vec{\gamma}}\lambda_{v_i,\gamma}/2$.
     Moreover, if $\phi$ does not interchange $B_{v_i}$, then $\gamma$ is non-amphidrome.  Thus we have that, for $B_{v_1}\neq B_{v_2}$,
\begin{align}\label{lami}
 \lambda_{v_i,\gamma}=
   \begin{cases}
 \frac{n(B_{v_i})}{m_\gamma}, & \mbox{if  $\phi$ does not interchange $B_{v_i}$}; \\
\frac{n(B_{v_i})}{m_\gamma}, & \mbox{if $\phi$ interchanges $B_{v_i}$ and $\gamma$ is amphidrome}; \\
 \frac{2n(B_{v_i})}{ m_\gamma}, & \mbox{if $\phi$ interchanges $B_{v_i}$ and $\gamma$ is non-amphidrome}.
\end{cases}
\end{align}
In particular, if $m_\gamma$ is odd, then $\lambda_{v_i,\gamma}=\frac{n(B_{v_i})}{m_\gamma}$.

\section{Bounds of fractional Dehn twist coefficients}\label{sectFDCTbounds}

\subsection{Genus 2 case}\label{subsectFDCTg=2}
First we give the classification of  decompositions of Riemann surfaces of genus two.
\begin{lemma}\label{lemclassificationg=2}
  The decompositions of a Riemann surface of genus two by an admissible system of cut curves can be classified in terms of weighted graphs (I)-(III) as follows.
\end{lemma}
  \begin{center}
   \begin{tikzpicture}
\usetikzlibrary{positioning}
  \node [shape=circle,draw] (v1) at (0,0) {2};
  \node [above]  at (0,0.5) {(I)};  \node [below]  at (0,-0.3) {$v_1$};
\end{tikzpicture}
~~~
\begin{tikzpicture}
\usetikzlibrary{positioning}
  \node [shape=circle,draw] (v1) at (0,0) {1};
  \node [shape=circle,draw] (v2) at (1.5,0) {1};
  \draw (v1) node[below=8pt]{$v_1$}--node[below]{\tiny{$e$}} (v2)node[below=8pt]{$v_2$};
  \node at (0.7,0.9){(II)};
\end{tikzpicture}
~~~
\begin{tikzpicture}
\usetikzlibrary{positioning}
  \node [shape=circle,draw] (v1) at (0,0) {$\mbox{~~}$};
  \node [shape=circle,draw] (v2) at (2,0) {$\mbox{~~}$ };
      \node [below=8pt] at (v2)  {$v_2$};
          \node [below=8pt] at (v1)  {$v_1$};
  \draw  (v1)to [bend left=30] node [below]{\tiny{${e_1}$}} (v2);
  \draw  (v1)to [bend right=30] node [below]{\tiny{${e_3}$}} (v2);
  \draw (v1) to node [below]{\tiny{$ {e_2}$}} (v2);
  \node at (1,0.9){(III)};
\end{tikzpicture}
  \end{center}
  \begin{proof}
    This problem is equivalent to classifying stable curves of genus two, which is trivial.
  \end{proof}

\begin{theorem}\label{thmType1-g=2}
  Let $\phi\in\mathrm{Mod}({\Sigma_2})$ be a pseudo-periodic map of negative twist, and $\gamma$ be a cut curve of type 1, then
  $$|c(\phi,\gamma)|\geq\frac1{12},$$
  and the equality holds if and only if $(G(F_\phi),\gamma)$ is either Figure (2-1a) or Figure (2-1b).
\end{theorem}
\begin{proof}
The possible cut curve $\gamma$ is $e$ in (II), and $m_\gamma=1$.

\noindent({\bf II1}). Assume that $\phi$ does not interchange $B_{v_i}$. Then $\gamma$ is non-amphidrome.

Claim: if $\rho(v_i)=1,g(B_{v_i})=0$ for some $i$, then $|c(\phi,\gamma)|>\frac1{12}$.

Proof of Claim: We may assume that $\rho(v_1)=1, g(B_{v_1})=0$.  Let $\gamma'$ be the cut curve only adjacent to $B_{v_1}$, and $\SA_{\gamma'}$ be the annular neighbourhood of $\gamma'$. Then $\phi$ maps boundary curves of $\SA_{\gamma'}$ (resp. $\SA_{\gamma}$) to boundary curves of $\SA_{\gamma'}$ (resp. $\SA_{\gamma}$). Thus $n(B_{v_1})\leq2$,  since the automorphism of Riemann sphere with three fixed points is identity. Hence $\lambda_{v_1,\gamma}=n(B_{v_1})\leq2$. Now consider $B_{v_2}$, we have that $n(B_{v_2})\leq6$: if $\rho(v_2)=0$, then $g(B_{v_2})=1$ and $n(B_{v_2})\leq6$  by Lemma \ref{lemmaValencies} (i); if $\rho(v_2)=1$, then $g(B_{v_2})=0$ and $n(B_{v_2})\leq2$ as above. Thus, by (\ref{eqFDCTnon-a}),
 $$|c(\phi,\gamma)|=\frac{\mu_{v_1,\gamma}}{\lambda_{v_1,\gamma}}
  +\frac{\mu_{v_2,\gamma}}{\lambda_{v_2,\gamma}}+K\geq\frac1{\mathrm{lcm}(\lambda_{v_1,\gamma},\lambda_{v_2,\gamma})}>\frac1{12},$$
  and we finish the proof of Claim.

Now we only need to prove the case that $\rho(v_i)=0,g(B_{v_i})=1,~i=1,2$. Since there is one edge $e$ adjacent to $v_i~(i=1,2)$, we know that the restriction of $\phi$  on $B_{v_i}$ can not induce an unramified covering of degree $n\geq 2$. Thus $n(B_{v_i})\leq 6$ and the valency data are classified in Lemma \ref{lemmaValencies} (i). So, by (\ref{eqFDCTnon-a}),
  $$|c(\phi,\gamma)| =\frac{\mu_{v_1,\gamma}}{\lambda_{v_1,\gamma}}
  +\frac{\mu_{v_2,\gamma}}{\lambda_{v_2,\gamma}}+K \geq  \frac1{12}.$$
If the equality holds, then $K=-1$. Furthermore, by  Lemma \ref{lemmaValencies} and Theorem \ref{thmMMphi}, the cut curves and the pseudo-periodic maps with the lowest bound are classified  as follows:
\vspace{2mm}

(2-1a)~ $B_{v_1}: {\bf\frac13}+\frac13+\frac13 , ~~B_{v_2}: {\bf\frac34}+\frac34+\frac12$, $K=-1$;  \vspace{1mm}

(2-1b)~ $B_{v_1}: {\bf\frac5{6}}+\frac23+\frac12, ~~B_{v_2}: {\bf\frac14}+\frac14+\frac12$, $K=-1$.
\vspace{2mm}

\noindent Here we write valency data of  $\gamma$ by bold face characters. For the reader's convenience,  we take the case (2-1b) as an example, the valencies of the two boundary curves of $\SA_\gamma$ are $(m_{v_1,\gamma},\lambda_{v_1,\gamma},\sigma_{v_1,\gamma})=(1,6,5)$ and $(m_{v_2,\gamma},\lambda_{v_2,\gamma},\sigma_{v_2,\gamma})=(1,4,1)$. Thus $\mu_{v_1,\gamma}=5,\mu_{v_2,\gamma}=1$, and
$$|c(\phi,\gamma)|=\frac56+\frac14+(-1)=\frac1{12}.$$

 Using the correspondence in \cite[Section 4]{Liu}, it is easy to check that the dual graphs of the above two pseudo-periodic maps are Figure (2-1a) and Figure (2-1b) respectively.
 In the following, we always use same labels (for example, (2-1a)) for pseudo-periodic maps and their corresponding dual graphs without mention.

 \noindent({\bf II2}). Assume that $\phi$ interchanges $B_{v_i}$. Then  $\gamma$ is amphidrome and $\lambda_{v_i,\gamma}=n(B_{v_i})$ $(i=1,2)$ by (\ref{lami}). As above, we have that   $n(B_{v_i})\leq6$.  So, by (\ref{eqFDCT-a}),
 $$|c(\phi,\gamma)|=\frac1{m_\gamma}(\frac{\mu_{v_i,\gamma}}{\lambda_{v_i,\gamma}}
  +K)\geq\frac1{n(B_{v_i})}\geq\frac1{6}.$$

By Theorem \ref{thmMMphi}, there is no other pseudo-periodic maps $(\phi,\gamma)$ with $|c(\phi,\gamma)|=\frac1{12}$.
\end{proof}

\begin{corollary}\label{coro2-1}
  Let $f$ be a family of curves of genus $2$ with $\delta_1(f)\neq0$, then
  $$\delta_1(f)\geq\frac1{12},$$
  and the equality holds if and only if  all the singular fibers of $f$ have smooth reduction except one whose dual graph is either Figure (2-1a) or Figure (2-1b).
\end{corollary}
\begin{proof}
Let $F$ be a singular  fiber of $f$, then

 Claim (*): $\delta_1(F)\geq\frac1{12}$, and $\delta_1(F)=\frac1{12}$ if and only if the dual graph of $F$ is either Figure (2-1a) or Figure (2-1b).

Proof of Claim (*):  By (\ref{eqdeltai}) and Theorem \ref{thmType1-g=2}, we have
  $$\delta_1(F)=\sum_{\gamma\in\SC_{\phi_{F},1}}|c(\phi_{F},\gamma)|\geq\frac1{12}.$$
  If $\delta_1(F)=\frac1{12}$, then $\phi_F$ has only one cut curve $\gamma$ of type 1, and the possible dual graphs of  $(\phi_F,\gamma)$ are Figure (2-1a) and Figure (2-1b). Hence we obtain the claim.

 Since $\delta_1(f)\neq0$,  there is a singular fiber of $f$, say $F_1$, with $\delta_1(F_1)\neq0$ by (\ref{edeltaf}).    So
 $$\delta_1(f)\geq\delta_1(F_1)\geq\frac1{12},$$
 and $\delta_1(f)=\delta_1(F_1)=\frac1{12}$ if and only if $\delta_1(F_1)=\frac1{12}$ and $\delta_1(F_i)=0,~i\geq2$. Then we complete the proof by Claim (*).
\end{proof}

\begin{theorem}\label{thmType0-g=2}
  Let $\phi\in\mathrm{Mod}({\Sigma_2})$ be a pseudo-periodic map of negative twist, and $\gamma$ be a cut curve of type 0, then
  $$|c(\phi,\gamma)|\geq\frac1{3},$$
  and the equality holds if and only if $(G(F_\phi),\gamma)$ is one of  figures: (2-0a) -- (2-0d).
\end{theorem}
\begin{proof}
The possible weighted graphs $G_\SC$ are (I), (II) and (III) classified in Lemma \ref{lemclassificationg=2}.

 \underline{ Case (I)}
  In this case,  $\gamma$ is adjacent to $B_{v_1}$ with $g(B_{v_1})+\rho({v_1})=2$ and $\rho({v_1})\geq1$.  Let $(m_1,\lambda_1,\sigma_1)$ and $(m_2,\lambda_2,\sigma_2)$ be the valencies of the boundary curves of $\SA_\gamma$.

\noindent({\bf{I1}}). First consider $\rho({v_1})=1$. Then $g(B_{v_1})=1$ and $m_\gamma=1$.

\noindent If $\gamma$ is amphidrome, then  $\lambda=n(B_{v_1})/2\leq 3$. Thus, by (\ref{eqFDCT-a})  and Lemma \ref{lemmaValencies},
    $$|c(\phi,\gamma)| =\frac{\mu}{\lambda}+K \geq\frac1{3}.$$
\noindent Furthermore, the equality holds if and only if $K=0$ and the valency data of the boundary curves of $\SA_\gamma$ is

\vspace{2mm}
(2-0a) $B_{v_1}: {\frac1{6}}+{\bf\frac 13}+{\frac12}$.
\vspace{1mm}

\noindent    If $\gamma$ is non-amphidrome,  then $\lambda_1=\lambda_2=n(B_{v_1})$. By  (\ref{eqFDCTnon-a}), we have that
   $$|c(\phi,\gamma)| =\frac{\mu_1}{\lambda_1}+\frac{\mu_2}{\lambda_2}+K \geq\frac13.$$
   Furthermore, the equality holds if and only if $K=-1$ and the valency data of the boundary curves of $\SA_\gamma$ is

\vspace{2mm}
(2-0b) $B_{v_1}: {\bf\frac23}+{\bf\frac23}+\frac 23$.
\vspace{1mm}

\noindent({\bf{I2}}). Now we consider $\rho({v_1})=2$, then $g(B_{v_1})=0$. Let the two cut curves be $\gamma=\gamma_1$ and $\gamma_2$. Then $m_\gamma\leq2$.
 Note that $\phi|_{B_{v_1}}$ maps $\SA_{\gamma_i}$ to $\SA_{\gamma_j}$, and  maps boundary curves of $\SA_{\gamma_i}$ to those of $\SA_{\gamma_j}$, where  $1\leq i,j\leq 2$.  If $m_\gamma=1$, then $\lambda\leq n(B_{v_1})\leq2$, and thus $|c(\phi,\gamma)|\geq\frac1{2}.$ If $m_\gamma=2$, then  $\lambda=1$ which is independent of the action of $\phi$ on boundary curves (note that if $\gamma$ is amphidrome, then $\lambda=n(B_{v_1})/2m_\gamma$). Thus $|c(\phi,\gamma)|\geq\frac12$, by (\ref{eqFDCTnon-a}) and (\ref{eqFDCT-a}).

\underline{Case (II)} In this case, we may assume that $\gamma$ is adjacent to $B_{v_1}$ only and $m_\gamma\leq2$, then we have $\rho(v_1)=1, g(B_{v_1})=0$. We have that $\lambda=1$ which is independent of the action of $\phi$ on boundary curves, and thus $|c(\phi,\gamma)|\geq\frac12$.

\underline{Case (III)} We may assume that $\gamma=e_1$.

\noindent({\bf{III1}}). Assume that $m_{\gamma}=3$.   If $\phi$ does not interchange $B_{v_i}$, then $\gamma$ is non-amphidrome, $n(B_{v_i})=3$ and $\lambda_{v_i,\gamma}=n(B_{v_i})/m_{\gamma}=1$ for $i=1,2$. So
 $|c(\phi,\gamma)|\geq\frac13$ by (\ref{eqFDCTnon-a}). Furthermore, the equality holds if and only if $K=1$ and the valencies of the boundary curves of $\CA_\gamma$ are

 \vspace{2mm}
(2-0c) $B_{v_1}: {\frac1{3}}+{\frac 23}+{\bf1}, ~~B_{v_2}: {\frac1{3}}+{\frac 23}+{\bf1}$.
\vspace{1mm}

\noindent If $\phi$ interchanges $B_{v_i}$, then $\gamma$ is amphidrome,  $n(B_{v_i})=3$ and $\lambda_{v_i,\gamma}=n(B_{v_i})/m_{\gamma}=1$, see (\ref{lami}). So
 $|c(\phi,\gamma)|\geq\frac13$ by (\ref{eqFDCT-a}). Furthermore, the equality holds if and only if $K=1$ and the valency data  of the boundary curves of $\CA_\gamma$ is

 \vspace{2mm}
(2-0d) $B_{v_1}: {\frac1{3}}+{\frac 23}+{\bf1}$.
\vspace{1mm}

\noindent({\bf{III2}}). Assume that $m_{\gamma}=2$.  If $\phi$ does not interchange $B_{v_i}$, then $\gamma$ is non-amphidrome, $n(B_{v_i})=2$ and $\lambda_{v_i,\gamma}=1$; if $\phi$ interchanges $B_{v_i}$,  then $\gamma$ is also non-amphidrome, $n(B_{v_i})=1$ and $\lambda_{v_i,\gamma}=1$ by (\ref{lami}). Thus, we always have that $\lambda_{v_i,\gamma}=1$ and  $|c(\phi,\gamma)|\geq\frac12.$

 \noindent({\bf{III3}}). Assume that $m_{\gamma}=1$. If  $\phi$ does not interchange $B_{v_i}$, then $\gamma$ is non-amphidrome, $\lambda_{v_i,\gamma}=n(B_{v_i})\leq2$; if $\phi$ interchanges $B_{v_i}$, then $\gamma$ is amphidrome and $n(B_{v_i})\leq2$. So $\lambda_{v_i,\gamma}=n(B_{v_i})\leq2$ by (\ref{lami}). Thus
 $|c(\phi,\gamma)|\geq\frac12$.

\end{proof}

\begin{corollary}\label{coro2-0}
 Let $f$ be a fibration of genus $2$ with $\delta_0(f)\neq0$, then
  $$\delta_0(f)\geq\frac1{3},$$
  and the equality holds if and only if  all the singular fibers of $f$ have smooth reduction except one whose dual graph is either Figure (2-0a) or Figure (2-0b).
\end{corollary}
\begin{proof}
Similar to the proof of Corollary \ref{coro2-1}.
\end{proof}

\subsection{Genus 3 case}
In this subsection, we use the same method as Section \ref{subsectFDCTg=2} to discuss lower bounds of fractional Dehn twist coefficients in genus 3 case.

\begin{lemma}
  The decompositions of a Riemann surface of genus three by an admissible system of cut curves can be classified in terms of weighted graphs (A)-(O) as follows.

{\upshape
\begin{tikzpicture}
\usetikzlibrary{positioning}
  \node [shape=circle,draw] (v1) at (0,0) {3};
  \node [above]  at (0,0.5) {(A)};  \node [below]  at (0,-0.3) {$v_1$};
\end{tikzpicture}
~~~
\begin{tikzpicture}
\usetikzlibrary{positioning}
  \node [shape=circle,draw] (v1) at (0,0) {2};
  \node [shape=circle,draw] (v2) at (1.5,0) {1};
  \draw (v1) node[below=8pt]{$v_1$}--node[below]{\tiny{$e_1$}} (v2)node[below=8pt]{$v_2$};
  \node at (0.7,0.9){(B)};
\end{tikzpicture}
~~~
\begin{tikzpicture}
\usetikzlibrary{positioning}
  \node [shape=circle,draw] (v1) at (0,0) {1};
  \node [shape=circle,draw] (v2) at (1.5,0) {1};
  \node [shape=circle,draw] (v3) at (3,0) {1};
  \draw (v1)node[below=8pt]{$v_1$}--node[below]{\tiny{$e_1$}} (v2)node[below=8pt]{$v_3$}--node[below]{\tiny{$e_2$}} (v3)node[below=8pt]{$v_2$};
  \node at (1.5,0.9){(C)};
\end{tikzpicture}
~~~
\begin{tikzpicture}
\usetikzlibrary{positioning}
  \node [shape=circle,draw] (v1) at (0,0) {1};
  \node [shape=circle,draw] (v4) at (1.5,0){$\mbox{~~}$};
  \node [shape=circle,draw] (v3) at (3,0) {1};
    \node [shape=circle,draw] (v2) at (1.5,1.2) {1};
  \draw (v1)node[below=8pt]{$v_1$}-- node[above]{\tiny{$e_1$}}(v4) --node[above]{\tiny{$e_3$}} (v3) node[below=8pt]{$v_3$};\draw (v2) node[right=6pt]{$v_2$}--node[left]{\tiny{$e_2$}} (v4) node[below=8pt]{$v_4$};
  \node at (1.5,2.1){(D)};
\end{tikzpicture}\\

\begin{tikzpicture}
\usetikzlibrary{positioning}
  \node [shape=circle,draw] (v1) at (0,0) {1};
  \node [shape=circle,draw] (v2) at (2,0) {1};
  \node [below=8pt] at (v2)  {$v_2$};
  \draw  (v1)node[below=8pt]{$v_1$} to [bend left=15] node [above]{\tiny{$e_1$}}(v2);
  \draw  (v1) to [bend right=15]node [below]{\tiny{$e_2$}}(v2);
  \node at (1,0.9){(E)};
\end{tikzpicture}
~~~
\begin{tikzpicture}
\usetikzlibrary{positioning}
  \node [shape=circle,draw] (v1) at (0,0) {1};
  \node [shape=circle,draw] (v3) at (1.5,0) {$\mbox{~~}$};
  \node [shape=circle,draw] (v4) at (3,0) {$\mbox{~~}$};
  \node [shape=circle,draw] (v2) at (4.5,0) {1};
    \node [below=8pt] at (v4)  {$v_4$};
  \draw  (v3) node[below=8pt]{$v_3$} to [bend left=15] node [above]{\tiny{$e_1$}}(v4);
  \draw  (v3)to [bend right=15] node [below]{\tiny{$e_2$}} (v4);
  \draw (v1) node[below=8pt]{$v_1$} to node[below]{\tiny{$e_3$}} (v3);\draw (v4)to node[below]{\tiny{$e_4$}} (v2) node[below=8pt]{$v_2$};
  \node at (2.2,0.9){(F)};
\end{tikzpicture}\\

\begin{tikzpicture}
\usetikzlibrary{positioning}
  \node [shape=circle,draw] (v1) at (0,0) {1};
  \node [shape=circle,draw] (v2) at (1.5,0) {$\mbox{~~}$ };
  \node [shape=circle,draw] (v3) at (3,0) {1};
    \node [below=8pt] at (v2)  {$v_2$};
  \draw  (v1) node[below=8pt]{$v_1$} to [bend left=15] node [above]{\tiny{$e_1$}} (v2);
  \draw  (v1)to [bend right=15] node [below]{\tiny{$e_2$}}  (v2);
  \draw (v2)to node[below]{\tiny{$e_3$}} (v3) node[below=8pt]{$v_3$};
  \node at (1.5,0.9){(G)};
\end{tikzpicture}
~~~
\begin{tikzpicture}
\usetikzlibrary{positioning}
  \node [shape=circle,draw] (v1) at (0,0) {1};
  \node [shape=circle,draw] (v2) at (2,0) {$\mbox{~~}$ };
      \node [below=8pt] at (v2)  {$v_2$};
          \node [below=8pt] at (v1)  {$v_1$};
  \draw  (v1)to [bend left=30] node [below]{\tiny{${e_1}$}} (v2);
  \draw  (v1)to [bend right=30] node [below]{\tiny{${e_3}$}} (v2);
  \draw (v1) to node [below]{\tiny{$ {e_2}$}} (v2);
  \node at (1,0.9){(H)};
\end{tikzpicture}
~~~
\begin{tikzpicture}
\usetikzlibrary{positioning}
  \node [shape=circle,draw] (v1) at (0,0) { $\mbox{~~}$};
  \node [shape=circle,draw] (v2) at (1.5,0) { $\mbox{~~}$};
  \node [shape=circle,draw] (v3) at (3,0) {1};
    \node [below=8pt] at (v2)  {$v_2$};
          \node [below=8pt] at (v1)  {$v_1$};
            \node [below=8pt] at (v2)  {$v_2$};
          \node [below=8pt] at (v3)  {$v_3$};
  \draw  (v1)to [bend left=30] node [below]{\tiny{${e_1}$}} (v2);
  \draw  (v1)to [bend right=30] node [below]{\tiny{${e_3}$}} (v2);
  \draw (v1) to node [below]{\tiny{$ {e_2}$}}  (v2); \draw (v3) to node [below]{\tiny{$ {e_4}$}}  (v2);
  \node at (1.5,0.9){(I)};
\end{tikzpicture}\\

\begin{tikzpicture}
\usetikzlibrary{positioning}
  \node [shape=circle,draw] (v1) at (0,0) {$\mbox{~~}$  };
  \node [shape=circle,draw] (v3) at (1,1) {1};
  \node [shape=circle,draw] (v2) at (2,0) {$\mbox{~~}$ };
    \node [below=8pt] at (v2)  {$v_2$};
          \node [below=8pt] at (v1)  {$v_1$};
                  \node [right=8pt] at (v3)  {$v_3$};
  \draw  (v1)to [bend left=15] node [below]{\tiny{${e_1}$}}(v2);
  \draw  (v1)to [bend right=15] node [below]{\tiny{${e_2}$}} (v2);
  \draw (v1) to node [above]{\tiny{${e_3}$}} (v3) to node [right]{\tiny{${e_4}$}} (v2);
  \node at (1,1.7){(J)};
\end{tikzpicture}
~~~
\begin{tikzpicture}
\usetikzlibrary{positioning}
  \node [shape=circle,draw] (v1) at (0,0) {$\mbox{~~}$ };
  \node [shape=circle,draw] (v2) at (0,1.6) {$\mbox{~~}$ };
  \node [shape=circle,draw] (v3) at (1,0.8) {$\mbox{~~}$ };
  \node [shape=circle,draw] (v4) at (2,0.8) {1};
   \node [left=8pt] at (v2)  {$v_2$};
          \node [left=8pt] at (v1)  {$v_1$};
                  \node [below=8pt] at (v3)  {$v_3$};
                    \node [below=8pt] at (v4)  {$v_4$};
  \draw  (v1)to [bend left=15]  node  [left] {\tiny{${e_1}$}} (v2);
  \draw  (v1)to [bend right=15]  node [right=-0.4pt]{\tiny{${e_2}$}} (v2);
  \draw (v1) to node  [below] {\tiny{${e_3}$}}  (v3) to node  [above] {\tiny{${e_4}$}}  (v2);\draw (v3) to node  [below] {\tiny{${e_5}$}}  (v4);
  \node at (1,2){(K)};
\end{tikzpicture}
~~~
\begin{tikzpicture}
\usetikzlibrary{positioning}
  \node [shape=circle,draw] (v1) at (0,0.5) { $\mbox{~~}$};
  \node [shape=circle,draw] (v2) at (2,0.5) { $\mbox{~~}$};
     \node [below=8pt] at (v2)  {$v_2$};
          \node [below=8pt] at (v1)  {$v_1$};
  \draw  (v1)to [bend left=15] node  [below] {\tiny{${e_2}$}} (v2);
  \draw  (v1)to [bend right=15] node  [below] {\tiny{${e_3}$}}  (v2);
 \draw  (v1)to [bend left=45] node  [below] {\tiny{${e_1}$}} (v2);
  \draw  (v1)to [bend right=45] node  [below] {\tiny{${e_4}$}} (v2);
  \node at (01,1.7){(L)};
\end{tikzpicture}
~~~
\begin{tikzpicture}
\usetikzlibrary{positioning}
  \node [shape=circle,draw] (v1) at (0,0) { $\mbox{~~}$};
  \node [shape=circle,draw] (v2) at (2,0) { $\mbox{~~}$};
  \node [shape=circle,draw] (v3) at (1,1) { $\mbox{~~}$};
    \node [below=8pt] at (v2)  {$v_2$};
          \node [below=8pt] at (v1)  {$v_1$};
                  \node [right=8pt] at (v3)  {$v_3$};
  \draw  (v1)to [bend left=15] node  [left] {\tiny{${e_1}$}}(v3);
  \draw  (v1)to [bend right=15] node  [right,below] {\tiny{${e_2}$}}(v3);
 \draw  (v2)to [bend left=15] node  [left] {\tiny{${e_3}$}}(v3);
  \draw  (v2)to [bend right=15] node  [right] {\tiny{${e_4}$}}(v3);\draw (v1)to  node  [below] {\tiny{${e_5}$}} (v2);
  \node at (1,1.7){(M)};
\end{tikzpicture}\\

\begin{tikzpicture}
\usetikzlibrary{positioning}
  \node [shape=circle,draw] (v3) at (0,0) { $\mbox{~~}$};
  \node [shape=circle,draw] (v4) at (1.5,0) { $\mbox{~~}$};
  \node [shape=circle,draw] (v1) at (0,1.5) { $\mbox{~~}$};
  \node [shape=circle,draw] (v2) at (1.5,1.5) { $\mbox{~~}$};
  \node [right=8pt] at (v2)  {$v_2$};
          \node [left=8pt] at (v1)  {$v_1$};
                  \node [below=8pt] at (v3)  {$v_3$};
                   \node [below=8pt] at (v4)  {$v_4$};
  \draw  (v1)to [bend left=15] node  [above] {\tiny{${e_1}$}}(v2);
  \draw  (v1)to [bend right=15] node  [below] {\tiny{${e_2}$}}(v2);
 \draw  (v4)to [bend left=15] node [below]  {\tiny{${e_4}$}}(v3);
  \draw  (v4)to [bend right=15] node [above]  {\tiny{${e_3}$}} (v3);\draw (v1)to node [left]  {\tiny{${e_5}$}} (v3);
  \draw (v2)to  node [right]  {\tiny{${e_6}$}} (v4);
  \node at (0.7,2.5){(N)};
\end{tikzpicture}
~~~
\begin{tikzpicture}
\usetikzlibrary{positioning}
  \node [shape=circle,draw] (v2) at (0,0) { $\mbox{~~}$};
  \node [shape=circle,draw] (v3) at (2,0) { $\mbox{~~}$};
  \node [shape=circle,draw] (v4) at (1,1) { $\mbox{~~}$};
  \node [shape=circle,draw] (v1) at (1,2.2) { $\mbox{~~}$};
   \node [below=8pt] at (v2)  {$v_2$};
          \node [right=8pt] at (v1)  {$v_1$};
                  \node [below=8pt] at (v3)  {$v_3$};
                   \node [below=8pt] at (v4)  {$v_4$};
  \draw  (v1)to node [left]  {\tiny{${e_4}$}} (v2)to node [below]  {\tiny{${e_5}$}} (v3)to node [left,below]  {\tiny{${e_3}$}}
  (v4) to node [below=5pt,left=-4pt]  {\tiny{${e_1}$}} (v1) to node [right]  {\tiny{${e_6}$}} (v3);\draw (v4)to (v2);
  \node at (1,3){(O)};
\end{tikzpicture}}
\end{lemma}
\begin{proof}
  See \cite[Lemma 3.2]{AI02}.
\end{proof}

\begin{theorem}\label{thmType1}
  Let $\phi\in\mathrm{Mod}({\Sigma_3})$ be a pseudo-periodic map of negative twist, and $\gamma$ be a cut curve of type 1, then
  $$|c(\phi,\gamma)|\geq\frac1{30},$$
  and the equality holds if and only if $(G(F_\phi),\gamma)$ is one of figures (3-1a), (3-1b) and (3-1c).
\end{theorem}
\begin{proof}
The possible cut curves are in (B), (C), (D), (F), (G), (I), and (K).

 \underline{ Case (B)} In this case, $\gamma$ is $e_1$ and $m_{\gamma}=1$. Similar to Claim 1 in the proof of Theorem \ref{thmType1-g=2}, we may assume that $g(B_{v_1})\leq2, g(B_{v_2})\leq1$, and thus $n(B_{v_1})\leq 10$ and $n(B_{v_2})\leq 6$ classified in Lemma \ref{lemmaValencies}. By (\ref{eqFDCTnon-a}),
  $$|c(\phi,\gamma)|=\frac{\mu_{v_1,\gamma}}{\lambda_{v_1,\gamma}}+\frac{\mu_{v_2,\gamma}}{\lambda_{v_2,\gamma}}+K \geq  \frac1{30}.$$
If the equality holds, then $K=-1$. Furthermore, by  Lemma \ref{lemmaValencies}, the cut curves and the pseudo-periodic maps with the lowest bound are classified  as follows:
\vspace{2mm}

(3-1a)~ $B_{v_1}: {\bf\frac3{10}}+\frac15+\frac12, ~~B_{v_2}: {\bf\frac13}+\frac13+\frac13$, $K=-1$; \vspace{1mm}

(3-1b)~ $B_{v_1}: {\bf\frac15}+\frac25+\frac25, ~~B_{v_2}: {\bf\frac56}+\frac23+\frac12$, $K=-1$; \vspace{1mm}

(3-1c)~ $B_{v_1}: {\bf\frac15}+\frac15+\frac35, ~~B_{v_2}: {\bf\frac56}+\frac23+\frac12$, $K=-1$.
\vspace{2mm}

\noindent The dual graphs of the above three pseudo-periodic maps are Figure (3-1a), Figure (3-1b), and Figure (3-1c) respectively.

 \underline{Case (C)} We may assume that $\gamma$ is $e_1$. Then $m_{\gamma}\leq2$. We may assume that $g(B_{v_i})=1$ and $n(B_{v_i})\leq 6$, where $i=1,3$. Then, by  Lemma \ref{lemmaValencies} (i) and (\ref{eqFDCTnon-a}), we have that
$$|c(\phi,\gamma)| \geq\frac1{2}\Big(\frac{\mu_{v_1,\gamma}}{\lambda_{v_1,\gamma}}+
\frac{\mu_{v_3,\gamma}}{\lambda_{v_3,\gamma}}+K\Big)\geq \frac1{24}.$$

\underline{Case (D)}  In this case, we may assume that $\gamma$ is $e_1$. If $m_\gamma\geq2$, then $n(B_{v_4})=m_\gamma\leq3$, and $\lambda_{v_4,\gamma}=n(B_{v_4})/m_\gamma=1$; if $m_\gamma=1$, then $\lambda_{v_4,\gamma}=n(B_{v_4})\leq2$. Since $\lambda_{v_1,\gamma}\leq 6$, we know that
$$|c(\phi,\gamma)| \geq\frac1{m_\gamma}\Big(\frac{\mu_{v_4,\gamma}}{\lambda_{v_4,\gamma}}+
\frac{\mu_{v_1,\gamma}}{\lambda_{v_1,\gamma}}+K\Big)
\geq \frac1{18}.$$


The rest cases are similar, and we omit their proofs.

\end{proof}
\begin{corollary}\label{coro3-1}
Let $f$ be a fibration of genus $3$ with $\delta_1(f)\neq0$, then
  $$\delta_1(f)\geq\frac1{30},$$
  and the equality holds if and only if  all the singular fibers of $f$ have smooth reduction except one whose dual graph is one of  figures (3-1a), (3-1b) and (3-1c).
\end{corollary}
\begin{proof}
Similar to the proof of Corollary \ref{coro2-1}.
\end{proof}

\begin{lemma}\label{lemmaType0}
  Let $\phi\in\mathrm{Mod}({\Sigma_3})$ be a pseudo-periodic map of negative twist, and $\gamma$ be a cut curve of type 0 which is adjacent to one component only. Then
  $$|c(\phi,\gamma)|\geq\frac1{5},$$
  and the equality holds if and only if $(G(F_\phi),\gamma)$ is one of the following figures (3-0-0a), (3-0-0b) and (3-0-0c).

{\upshape
\begin{center}
\begin{tikzpicture}
  [inner sep=0.5mm,
  place/.style={circle,draw}]
  \node[place] (v2) at (0.8,0.8) [label=above:3]  {};
    \node[place] (v3) at (1.6,0.8) [label=above:5]  {};
  \node[place] (v1) at (0,0.8) [label=above:1]  {};
      \node[place] (v4) at (2.4,1.2) [label=right:4]  {};
  \node[place] (v5) at (2.4,0.4) [label=right:3]  {};
  \draw[very thick]  (v3) to  (v4) to (v5) to (v3);
  \draw (v1) to  (v2) to (v3);
\node [below=4pt] at (v3)  {\tiny $C_{v}$};
  \node at (1.2,0) [below=8pt] {(3-0-0a)};
\end{tikzpicture}
~~~~~
\begin{tikzpicture}
  [inner sep=0.5mm,
  place/.style={circle,draw}]
  \node[place] (v1) at (0.8,0.8) [label=above:5]  {};
    \node[place] (v2) at (1.6,0.8) [label=above:2]  {};
  \node[place] (v3) at (0,0.8) [label=above:1]  {};
  \draw[very thick]  (v1) to [bend left=15]   (v2);
  \draw[very thick] (v1) to [bend right=15]  (v2);
  \draw (v3) to (v1); \node [below=4pt] at (v1)  {\tiny $C_{v}$};
  \node at (0.8,0) [below=8pt] {(3-0-0b)};
\end{tikzpicture}
~~~~~
  \begin{tikzpicture}
  [  place/.style={circle,draw,inner sep=0.5mm},
  place2/.style={circle,draw,fill,inner sep=0.2mm},]
  \node[place] (v1) at (0.8,0.8) [label=above:2]  {};
    \node[place] (v2) at (1.6,0.8) [label=above:10]  {};
    \node[place] (v3) at (0,1.2) [label=above:1]  {};
  \node[place] (v4) at (0,0.4) [label=above:1]  {};
  \node[place] (v5) at (2.4,1.2) [label=above:5]  {};
   \node[place] (v6) at (2.4,0.4) [label=above:3]  {};
   \node[place] (v7) at (3.2,0.4) [label=above:2]  {};
   \node[place] (v8) at (4,0.4) [label=above:1]  {};
  \draw (v3)-- (v1)--(v2)--(v5);\draw (v4)--(v1); \draw  (v2)--(v6)--(v7)--(v8);
   \node at (1.2,0) [below=8pt] {(3-0-0c)};
    \draw[very thick]  (v1)  to (v2); \node [below=4pt] at (v2)  {\tiny $C_{v}$};
\end{tikzpicture}
\end{center}}
\end{lemma}
\begin{proof}
The proof is similar to that of  Case (I) in Theorem \ref{thmType0-g=2}.

Assume $\gamma$ is adjacent to $B_v$ with $g(B_v)+\rho(v)\leq3$ and $\rho(v)\geq1$.  Let $(m_1,\lambda_1,\sigma_1)$ and $(m_2,\lambda_2,\sigma_2)$ be the valencies of the boundary curves of $\SA_\gamma$.

   If $\gamma$ is non-amphidrome,  then $\lambda_1=\lambda_2=n(B_v)/m_\gamma$. Since $\rho(v)\geq1$ and $g=3$,  by Lemma \ref{lemmaValencies}, we have that
   $$|c(\phi,\gamma)|=\frac1{m_\gamma}(\frac{\mu_1}{\lambda_1}+\frac{\mu_2}{\lambda_2}+K) \geq\frac15.$$
   Furthermore, the equality holds if and only if  $K=-1$, $g(B_v)=2,m_\gamma=\rho(v)=1$, and the valencies of the boundary curves of $\SA_\gamma$ are one of the following cases:

\vspace{2mm}
(3-0-0a) $B_v: {\bf\frac45}+{\bf\frac35}+\frac 35$.
\vspace{1mm}

(3-0-0b) $B_v: {\bf\frac25}+{\bf\frac25}+\frac15$.
\vspace{1mm}

   If $\gamma$ is amphidrome, then the valencies of the boundary curves of $\SA_\gamma$ are the same $(2m_\gamma,\lambda,\sigma)$. Here $\lambda=n(B_v)/2m_\gamma$. Since $\rho(v)\geq1$, $g(B_v)\leq 2$, we have $n(B_v)\leq10$. Thus
    $$|c(\phi,\gamma)|=\frac1{m_\gamma}(\frac{\mu}{\lambda}+K) \geq\frac2{n(B_v)}\geq\frac1{5}.$$
 Furthermore, the equality holds if and only if $K=0$,  $g(B_v)=2,m_\gamma=\rho(v)=1$, and the valencies of the boundary curves of $\SA_\gamma$ are

\vspace{2mm}
(3-0-0c) $B_v: {\frac3{10}}+{\bf\frac 15}+{\frac12}$.
\vspace{1mm}

\end{proof}

\begin{theorem}\label{thmType0}
   Let $\phi\in\mathrm{Mod}({\Sigma_3})$ be a pseudo-periodic map of negative twist, and $\gamma$ be a cut curve of type 0. Then
  $$|c(\phi,\gamma)|\geq\frac1{12},$$
  and the equality holds if and only if $(G(F_\phi),\gamma)$ is one of figures (3-0a), (3-0b), (3-0c).
\end{theorem}
\begin{proof}

By Lemma \ref{lemmaType0}, we may assume that $\gamma$ is adjacent to two connected components. Thus $\gamma$ is in (E)-(O).

\underline{Case (E)} Without loss of generality, we may assume $\gamma=e_1$. Then $m_\gamma\leq2$.

\noindent({\bf{E1}}). Assume that $m_\gamma=1$. If $\phi$ interchanges $B_{v_i}$, then $\gamma$ is amphidrome. So $\lambda_{v_i,\gamma}= n(B_{v_i})$ for $i=1,2$  by (\ref{lami}), and $n(B_{v_i})\leq4$ by Lemma \ref{lemmaValencies} (i).  Thus
$|c(\phi,\gamma)|\geq\frac14$ by (\ref{eqFDCT-a}).
If $\phi$ does not interchanges $B_{v_i}$, then $\gamma$ is non-amphidrome. So $\lambda_{v_i,\gamma}= n(B_{v_i})$ for $i=1,2$, and $n(B_{v_i})\leq4$. Thus, by (\ref{eqFDCTnon-a}),
$$|c(\phi,\gamma)| =\frac{\mu_{v_1,\gamma}}{\lambda_{v_1,\gamma}}+\frac{\mu_{v_2,\gamma}}{\lambda_{v_2,\gamma}}+K \geq\frac1{12}.$$
Furthermore, the equality holds if and only if $K=-1$ and the valencies of the boundary curves of $\SA_\gamma$ are

\vspace{2mm}
(3-0a) $B_{v_1}: {\bf\frac34}+\frac 34+\frac12,~~ B_{v_2}: {\bf\frac13}+\frac13+\frac13$.
\vspace{1mm}

\noindent({\bf{E2}}). Assume that $m_\gamma=2$.  If $\phi$ interchanges $B_{v_i}$, then  $\gamma$ is non-amphidrome, and $\lambda_{v_i,\gamma}=2n(B_{v_i})/m_\gamma=n(B_{v_i})~(i=1,2)$  by (\ref{lami}). Similarly, $\lambda_{v_i,e_2}=n(B_{v_i})$, and thus $\lambda_{v_i,e_j}=n(B_{v_i})$ for $i=1,2, j=1,2$. By Lemma \ref{lemmaValencies} (i), we have that $n(B_{v_i})\leq4$. So $|c(\phi,\gamma)|\geq\frac18$  by (\ref{eqFDCTnon-a}). If $\phi$ does not interchange $B_{v_j}$, then  $\gamma$ is also non-amphidrome, and $\lambda_{v_i,\gamma}=n(B_{v_i})/2\leq3$. Thus
$$|c(\phi,\gamma)| =\frac12\Big(\frac{\mu_{v_1,\gamma}}{\lambda_{v_1,\gamma}} +\frac{\mu_{v_2,\gamma}}{\lambda_{v_2,\gamma}}+K\Big) \geq\frac1{12}.$$

\noindent Furthermore, the equality holds if and only if  $K=-1$ and the valencies of the boundary curves of $\SA_\gamma$ are one of the following two cases:

\vspace{2mm}

(3-0b) $B_{v_1}: \frac56+{\bf\frac 23}+\frac12, ~~B_{v_2}: \frac14+\frac14+{\bf\frac12}$;\vspace{1mm}

(3-0c)  $B_{v_1}: \frac56+{\bf\frac 23}+\frac12, ~~B_{v_2}: \frac34+\frac34+{\bf\frac12}$.
\vspace{2mm}

 Though the rest cases are similar as above, we give the detail proof here for the reader's convenience.

\underline{Case (F)} We may assume that $\gamma=e_1$.  In this case, the rational component $B_{v_i}~(i=3,4)$ is adjacent to three edges,  $m_{\gamma}\leq 2$,  and $n(B_{v_i})\leq2$. If $m_\gamma=1$, then $\lambda_{v_i,\gamma}=n(B_{v_i})=1$ by (\ref{lami}). If $m_\gamma=2$, then  $\lambda_{v_i,\gamma}\leq 2n(B_{v_i})/m_\gamma\leq2$ by (\ref{lami}).   So, we always have that $\lambda_{v_i,\gamma}\leq2$. By (\ref{eqFDCTnon-a}) and (\ref{eqFDCT-a}),
$$|c(\phi,\gamma)|\geq\frac14.$$

\underline{Case (G)} We may assume that $\gamma=e_1$. Then $m_\gamma\leq2$.

\noindent({\bf{G1}}). If $m_\gamma=1$, then $n(B_{v_2})=1$ and $\lambda_{v_2,\gamma}=1$. Since $\lambda_{v_1,\gamma}=\lambda_{v_1,e_2}=n(B_{v_1})$,    we know that $\lambda_{v_1,\gamma}=n(B_{v_1})\leq4$. Thus
$$|c(\phi,\gamma)|\geq\frac14.$$

\noindent({\bf{G2}}). If $m_{\gamma}=2$, then $n(B_{v_2})=2$ and $\lambda_{v_2,\gamma}=1$. Since $\lambda_{v_1,\gamma}=\lambda_{v_1,e_2}=\frac12 n(B_{v_1})\leq3$, we know that
$$|c(\phi,\gamma)|\geq\frac16.$$

\underline{Case (H)} We may assume that $\gamma=e_1$. Then $m_\gamma\leq3$.

\noindent({\bf{H1}}). If $m_{\gamma}\geq2$,  then $n(B_{v_2})=m_\gamma$, $\lambda_{v_2,\gamma}=n(B_{v_2})/m_\gamma=1$, and $\lambda_{v_1,\gamma}=n(B_{v_1})/m_\gamma$.  Since $n(B_{v_1})\leq6$, we have that
  $$|c(\phi,\gamma)|=\frac1{m_\gamma}(\frac{\mu_1}{\lambda_{v_1,\gamma}}+\frac{\mu_2}{\lambda_{v_2,\gamma}}+K)\geq\frac1{m_\gamma\lambda_{v_1,\gamma}}\geq\frac16.$$

\noindent({\bf{H2}}). If $m_{\gamma}=1$, then $\lambda_{v_2,\gamma}= n(B_{v_2})\leq2$ and  $\lambda_{v_1,\gamma}=n(B_{v_1})\leq6$. So
$$|c(\phi,\gamma)|\geq\frac16.$$

\underline{Case (I)} We may assume that $\gamma=e_1$. Then $m_\gamma\leq3$.
 In this case, we always have that $\lambda_{v_1,\gamma}=\lambda_{v_2,\gamma}\leq2$, similarly as Case (H). So
 $$|c(\phi,\gamma)|\geq\frac16.$$

\underline{ Case (J)} If $\gamma$ is either $e_1$ or $e_2$, then $|c(\phi,\gamma)|\geq\frac14$, similarly as Case (F). Now we may assume that $\gamma$ is $e_3$. Then $m_\gamma\leq2$.

\noindent({\bf{J1}}).  If $\phi$ does not interchange $B_{v_i}~(i=1,2)$, then $m_\gamma=1$, $\lambda_{v_1,\gamma}=n(B_{v_1})\leq2$ and $\lambda_{v_3,\gamma}\leq6$. So $$|c(\phi,\gamma)|\geq\frac16.$$

 \noindent({\bf{J2}}). Assume that $\phi$ interchanges $B_{v_i}$. Then $m_\gamma=2$ and $\lambda_{v_3,\gamma}=n(B_{v_3})/2\leq3$. In this case, we always have that $\phi^2(\vec e_i)=\vec e_i~(i=1,2)$. Thus $n(B_{v_i})=1~(i=1,2)$ and $\lambda_{v_1,\gamma}=1$.  So
 $$|c(\phi,\gamma)|\geq\frac16.$$

 Similarly, we can obtain results for Cases (K), (M), (N) and (O).

\underline{ Case (L) } We may assume that $\gamma=e_1$. Then $m_\gamma\leq4$.

\noindent({\bf{L1}}). Assume that $m_{\gamma}\geq2$.
If $m_\gamma$ is even and $\phi$ interchanges $B_{v_i}$, then $\gamma$ is non-amphidrome, $n(B_{v_i})=m_\gamma/2$, and $\lambda_{v_i,\gamma}= 2n(B_{v_i})/m_\gamma=1$ by (\ref{lami}).  If  $m_\gamma$ is even and $\phi$ does not interchanges $B_{v_i}$, then  $\gamma$ is also non-amphidrome, $n(B_{v_i})=m_\gamma$, and $\lambda_{v_i,\gamma}= n(B_{v_i})/m_\gamma=1$.
 If $m_\gamma$ is odd, that is, $m_\gamma=3$, then $n(B_{v_i})=3$ and $\lambda_{v_i,\gamma}=n(B_{v_i})/m_\gamma=1$  by (\ref{lami}).  So, we always have that $\lambda_{v_i,\gamma}=1$ and  $|c(\phi,\gamma)|\geq\frac14.$

\noindent({\bf{L2}}). If $m_{\gamma}=1$, then $\lambda_{v_1,\gamma}\leq n(B_{v_i})\leq3$.  So  $|c(\phi,\gamma)|\geq\frac13.$
 \end{proof}

\begin{corollary}\label{coro3-0}
 Let $f$ be a fibration of genus $3$ with $\delta_0(f)\neq0$, then
  $$\delta_0(f)\geq\frac1{6},$$
  and the equality holds if and only if  all the singular fibers of $f$ have smooth reduction except one whose dual graph is one of  figures (3-0a), (3-0b) and (3-0c).
\end{corollary}
\begin{proof}
Let $F$ be a singular fiber of $f$, then we claim that:

Claim: $\delta_0(F)\geq\frac16$, and $\delta_0(F)=\frac1{6}$ if and only if the dual graph of $F$ is one of figures (3-0a),(3-0b) and (3-0c).

Proof of Claim: By Lemma \ref{lemmaType0} and (\ref{eqdeltai}), we may assume that the dual graph of the stable model $\tilde F$ of $F$ is one of (E)--(O).
 For each of these graphs,  there are at least two cut curves of type 0 adjacent to two connected components. So, by (\ref{eqdeltai}) and Theorem \ref{thmType0}, we have
  $$\delta_0(F)=\sum_{\gamma\in\SC_{\phi_F,0}}|c(\phi,\gamma)|\geq2\cdot\frac1{12}=\frac16.$$
  If $\delta_0(F)=\frac1{6}$, then $F$ has exactly two cut curves $\gamma$ of type 0 with $|c(\phi,\gamma)|=\frac1{12}$. Thus we obtain Claim.

  The rest of the proof  is similar to that of Corollary \ref{coro2-1}.
\end{proof}

\begin{remark}
   From the proof of Theorem \ref{thmType0}, we know that if $\gamma$ is a cut curve of type 0 in (F)--(O), and $\gamma$ is adjacent to two connected components, then $|c(\phi,\gamma)|\geq\frac16$. Similarly to the proof of Corollary \ref{coro3-0}, we have that:

  If the dual graph of the stable model $\tilde F$ of $F$ is one of (F)--(O), then $\delta_0(F)\geq\frac13$.
\end{remark}

Now we can give an immediate application of the above results to the effective Bogomolov conjecture \cite{LT17}.

\begin{proof}[Proof of Theorem \ref{thmBogo}]
Denote by $f$ the family of curves corresponding to $C/K$. By the assumption, we know that the semistable model of $f$ is not smooth. So either $\delta_0(f)\neq0$ or $\delta_1(f)\neq0$.  By \cite[Theorem 2.4]{Ci11},
\begin{equation*}
\inf_{D\in\mbox{Div}^1(\bar{C})}a'(D)\geq
\frac{1}{2(2g+1)}\Big(\frac{(g-1)^2}{2g(7g+5)}{\delta_0( f)}+\sum\limits_{i\in(0,g/2]}\frac{2i(g-i)}{g}{\delta_i(f)}\Big).
\end{equation*}
If $\delta_1(f)\neq0$, then
  \begin{equation*}
\inf_{D\in\mbox{Div}^1(\bar{C})}a'(D)\geq
    \begin{cases}
      \frac1{120}, &\mbox{ if } g=2,\\
      \frac1{315}, &\mbox{ if } g=3.
    \end{cases}
  \end{equation*}
 If $\delta_0(f)\neq0$, then
  \begin{equation*}
\inf_{D\in\mbox{Div}^1(\bar{C})}a'(D)\geq
    \begin{cases}
      \frac1{2280}, &\mbox{ if } g=2,\\
      \frac1{3276}, &\mbox{ if } g=3.
    \end{cases}
  \end{equation*}

  Comparing these two cases, we get our result.

\end{proof}

\section{Lower bounds of modular invariants for genus 2}\label{sectMaing=2}

\subsection{Lower bounds}

Now we can use results in Section \ref{sectFDCTbounds} to prove Theorem \ref{thmlambdakappa}, Theorem \ref{thmg3mod}, and Theorem \ref{thmg4mod}.

\begin{proof}[Proof of Theorem \ref{thmlambdakappa}]

Since $f$ is a family of curves of genus 2, we know that (\cite[p.317]{Mu83})
 \begin{equation}\label{formula}
\lambda(f)=\frac1{10}\delta_0(f)+\frac15\delta_1(f),~~~~
\kappa(f)=\frac15\delta_0(f)+\frac75\delta_1(f).
\end{equation}

Because $f$ is non-isotrivial,  either $\delta_0(f)\neq0$ or $\delta_1(f)\neq0$. So there are the following two cases.

Case 1: $\delta_0(f)\neq0$. By  (\ref{formula}) and Corollary \ref{coro2-0}, we know that
\begin{equation}\label{equproof3.1}
\lambda(f)\geq\frac1{10}\delta_0(f)\geq\frac1{10}\times\frac13=\frac1{30},~~
\delta(f)\geq\delta_0(f)\geq\frac13,~~
\kappa(f)\geq\frac1{5}\delta_0(f)\geq\frac15\times\frac13=\frac1{15}.
\end{equation}

Case 2: $\delta_1(f)\neq0$. Similarly, we have that
$$\lambda(f)\geq\frac15\times\frac1{12}=\frac1{60},~~\delta(f)\geq\frac1{12},~~\kappa(f)\geq\frac75\times\frac1{12}=\frac7{60}.$$
 From the above two cases, it is easy to see that
\begin{align*}
\lambda(f)\geq\mathrm{min}\{\frac1{30},\frac1{60}\}=\frac1{60},~~
\delta(f)\geq\mathrm{min}\{\frac1{3},\frac1{12}\}=\frac1{12},~~
\kappa(f)\geq\mathrm{min}\{\frac1{15},\frac7{60}\}=\frac1{15}.
\end{align*}
Moreover, $\lambda(f)=\frac{1}{60}$ if and only if $\delta(f)=\frac{1}{12}$ if and only if $\delta_1(f)=\frac1{12}$ and $\delta_0(f)=0$. So we obtain  Theorem \ref{thmlambdakappa} (1) by Corollary \ref{coro2-1}. Similarly, we can get Theorem \ref{thmlambdakappa} (2).

Now we have completed the proof except that each equality of (\ref{equationlambda}) can be reached, which will be proved by examples in Section \ref{sectionexistence}.

\end{proof}

\begin{remark}\label{remarklambdakappa}
  From Theorem \ref{thmlambdakappa}, we know that if $f$ has $\lambda(f)=\frac1{60}$, then $\delta(f)=\frac1{12}$, and $\kappa(f)=\frac7{60}\neq \frac1{15}$ by Noether equality. Hence there does not exist a non-isotrivial family $f$ such that both $\lambda(f)$ and $\kappa(f)$ are minimal.
\end{remark}

\subsection{Proof of rigidity, Theorem \ref{thmrigidity}}

Now we want to study rigidity properties of non-isotrivial families of curves with minimal modular invariants.

Let $f:S\to C$ be a relative minimal fibration of genus $g\geq2$, and  $C$ be a smooth curve of genus $b$. We have three fundamental relative
invariants which are non-negative,
\begin{equation}
\begin{split}
K_f^2&=K_{S/C}^2=K_S^2-8(g-1)(b-1), \\
e_f&=\chi_{\mathrm{top}}(S)-4(g-1)(b-1),\\
\chi_f&=\deg f_*\omega_{S/C}=\chi(\mathcal O_S)-(g-1)(b-1).
\end{split}
\end{equation}
If $f$ is semistable, then
\begin{equation}
\lambda(f)=\chi_f,~~\delta(f)=e_f,~~\kappa(f)=K_f^2.
\end{equation}
Moreover, if $f$ is semistable and $e_f\neq0$, then $\chi_f$ and $K_f^2$ are positive.

Chern numbers $c_1^2(F),c_2(F),\chi_F$ of a singular fiber $F$ are defined as follows (see \cite{Ta10}),
 \begin{equation}\label{c1c2}
\begin{cases}
c_1^2(F)= 4N_F+F_{\mathrm{red}}^2+\alpha_F-\beta_F^-,&\\
c_2(F)=2N_F+\mu_F-\beta_F^+,&\\
12\chi_F=6N_F+F_{\mathrm{red}}^2+\alpha_F+\mu_F-\beta_F.
\end{cases}
\end{equation}
For the notations here, we refer to \cite[\S 1]{Ta10} and \cite[\S 2]{LT13a}.
By (\ref{c1c2}), it is easy to get Chern numbers of the extremal fibers  in Theorem \ref{thmlambdakappa}. For similar detailed computation, we refer to \cite{LT13a}. In the following, denote by $F_{2-1a}$  the singular fiber whose dual graph is Figure (2-1a) in Theorem \ref{thmlambdakappa}, and others are similar.
 {\renewcommand{\arraystretch}{1.5}
 \begin{figure}[h]
\begin{center}
\begin{tabular}{|c|c|c|c|c|c|c|c|c|c|c|}
\hline
 $F$  & $N_F$ & $F_{\mathrm{red}}^2$ & $\mu_F$ & $\alpha_F$ & $\beta_F^-$ & $\beta_F^+$   & $\beta_F$   & $\chi_F$ & $c_1^2(F)$ & $c_2(F)$\\
\hline
$F_{2-1a}$  & $2$ & $-4$ & $7$ & $0$ & $\frac{23}{12}$ & $\frac1{12}$  & $2$   & $\frac{13}{12}$ & $\frac{25}{12}$ & $\frac{131}{12}$
\\\hline
$F_{2-1b}$  & $2$ & $-4$ & $7$ & $0$  & $\frac{23}{12}$ & $\frac1{12}$  & $2$ & $\frac{13}{12}$ & $\frac{25}{12}$ & $\frac{131}{12}$
\\\hline
$F_{2-0a}$ & $1$ & $-2$ &  5 & 1 & $\frac5{3}$ & $\frac1{3}$  & $2$  & $\frac23$ & $\frac43$ & $\frac{20}{3}$ \\\hline
$F_{2-0b}$ & $1$ & $-1$ &  4 & 0  & $\frac2{3}$ & $\frac1{3}$  & $1$  & $\frac23$ & $\frac73$ & $\frac{17}{3}$
\\\hline
\end{tabular}
\end{center}
\caption{Chern numbers of extremal singular fibers \label{chenextremal}}
 \end{figure}}

The relative invariants can be obtained from modular invariants and Chern numbers of singular fibers (see \cite{Ta94, Ta96}), i.e.,
\begin{equation}\label{modinv}
\begin{cases}
K_f^2=\kappa(f)+\sum_{i=1}^sc_1^2(F_i),&\\
e_f=\delta(f)+\sum_{i=1}^sc_2(F_i),&\\
\chi_f=\lambda(f)+\sum_{i=1}^s\chi_{F_i}.
\end{cases}
\end{equation}

Let $F$ be a singular fiber of genus 2 with smooth reduction, then $F$ is of elliptic type [1] in \cite{NU73}. The Chern numbers of these fibers have been computed in \cite{GLT16}.
We rewrite the obtained table  in \cite[\S 5.1]{GLT16}  as Figure \ref{cheniso} in the following for conveniece, where $\chi_F=\frac1{12}(c_1^2(F)+c_2(F))$.

{\renewcommand{\arraystretch}{1.2}
\begin{figure}[h]
\begin{center}
\begin{tabular}{|c|c|c|c|c|c|c|}
\hline $F$ & $[\mathrm{\Rnm1^*_{0-0-0}}]$ & $[\mathrm{\Rnm2}]$ & $[\mathrm{\Rnm3}]$ & $[\mathrm{\Rnm4}]$ &
$[\mathrm{\Rnm5}]$ & $[\mathrm{\Rnm5^*}]$ \\
\hline
{${(c_1^2,c_2,\chi)}$} & $(2,10,1)$ & $(2,4,\frac12)$ & $(2,10,1)$ & $(3,9,1)$ & $(1,5,\frac12)$
& $(3,15,\frac32)$
\\\hline
$F$  & $[\mathrm{\Rnm6}]$  & $[\mathrm{\Rnm7}]$  & $[\mathrm{\Rnm7^*}]$ & [$\mathrm{\Rnm8}$-1] & [$\mathrm{\Rnm8}$-2] & $[\mathrm{\Rnm8}$-3] \\\hline
{${(c_1^2,c_2,\chi)}$}  & $(2,10,1)$ & $(1,5,\frac12)$ & $(3,15,\frac32)$ & $(\frac45,4,\frac25)$ & $(\frac{12}5,12,\frac65)$ & $(\frac{13}5,7,\frac45$)\\\hline
$F$  & [$\mathrm{\Rnm8}$-4] & [$\mathrm{\Rnm9}$-1]
& [$\mathrm{\Rnm9}$-2]  & [$\mathrm{\Rnm9}$-3] & [$\mathrm{\Rnm9}$-4] &
\\\hline
{${(c_1^2,c_2,\chi)}$}  & $(\frac{16}5,16,\frac85)$ & $(\frac85,8,\frac45)$
& $(\frac65,6,\frac35)$  & $(\frac{14}5,14,\frac75)$ & $(\frac{12}5,12,\frac65)$ &
\\\hline
\end{tabular}
\end{center}
\caption{Chern numbers of singular fibers of genus 2 with smooth reduction\label{cheniso}}
\end{figure}}

From Figure \ref{cheniso}, we know that $\chi_F\geq\frac25$. Furthermore if $\chi_F\neq\frac25,$ then $\chi_F\geq\frac12$.

\begin{proof}[Proof of Theorem \ref{thmrigidity}]

Since $S$ is a rational surface, $q(S)=p_g(S)=0, \chi(\CO_S)=1$, and $C\cong\mP^1$. Thus
$$\chi_f=\chi(\CO_S)+1=2.$$

 Since $\lambda(f)=\frac1{60}$, there is only one singular fiber, say $F_1$, whose dual graph is either Figure (2-1a) or Figure (2-1b) by Theorem \ref{thmlambdakappa}. From Figure \ref{chenextremal}, we know that $\chi_{F_1}=\frac{13}{12}$. By (\ref{modinv}),
$$2=\chi_f=\lambda(f)+\sum_{i=1}^s\chi_{F_i}=\frac1{60}+\frac{13}{12}+\sum_{i=2}^{s}\chi_{F_i}\geq\frac{11}{10}+(s-1)\frac25.$$
Hence $s\leq3$. If $f$ has only two singular fibers, then $f$ is isotrivial (see \cite{Be81}), and $\lambda(f)=0$. Therefore $f$ has three singular fibers exactly. We may assume that the singular fibers are over three fixed points of $\mP^1$ by projective transformation. Then the desired finiteness is from the solved Shafarevich conjecture (see \cite{Ar71,Pa68}).
\end{proof}

\section{Lower bounds of modular invariants for $g\geq3$}\label{sectMaing3}
In order to see our method for $g\geq3$ clearly, we prove Theorem \ref{thmg4mod} first.

\begin{proof}[Proof of Theorem \ref{thmg4mod}]
Since $\delta(f)\neq0$, there exists $0\leq i\leq [g/2]$ such that $\delta_i(f)\neq0$. From \cite[Theorem 1.4]{LT17}, we know that if $\delta_i(f)\neq0$, then
  \begin{equation*}
  \delta_i(f)\geq
    \begin{cases}
      \frac1{4g^2}, &\mbox{ if } i=0,\\
      \frac1{(4i+2)(4(g-i)+2)}, &\mbox{ if }i\geq1.
    \end{cases}
  \end{equation*}

In this proof, we use the following Moriwaki's inequality (see \cite[Theorem D]{Mo98})
\begin{equation}\label{moriwaki}
(8g+4)\lambda(f)\geq g\delta_0(f)+\sum_{i=1}^{[g/2]}4i(g-i)\delta_i(f).
\end{equation}

If $\delta_0(f)\neq0$, then, by (\ref{moriwaki}),
$$\lambda(f)\geq \frac1{8g+4}\cdot g\cdot \frac1{4g^2}.$$
If $\delta_i(f)\neq0$ for some $i\geq1$, then
$$\lambda(f)\geq \frac1{8g+4}\cdot 4i(g-i)\cdot\frac1{(4i+2)(4(g-i)+2)}.$$
Hence, combining all these cases, we have that
\begin{align*}
\lambda(f)&\geq\frac1{8g+4}\mathrm{min}\{g\cdot\frac1{4g^2},\frac{4(g-1)}{(4+2)(4(g-1)+2)},\ldots,\frac{4[\frac{g}2](g-[\frac{g}2])}{(4[\frac{g}2]+2)(4(g-[\frac{g}2])+2)}\}\\
&\geq\frac{1}{16g(2g+1)}.
\end{align*}
By Cornalba-Harris-Xiao's slope inequality (\cite[Theorem 2]{Xi87}), we have that
$$\kappa(f)\geq\frac{4g-4}g\lambda(f)\geq\frac{g-1}{4g^2(2g+1)}.$$

\end{proof}

Applied the proof of Theorem \ref{thmg4mod} to the case $g=3$, we get Theorem \ref{thmg3mod} by Corollary \ref{coro3-1} and Corollary \ref{coro3-0} directly. So we omit the proof of Theorem \ref{thmg3mod}.

\begin{proof}[Proof of Theorem \ref{thmg=3hyp}]
If $f:S\to C$ is a hyperelliptic fibration of genus $g\geq2$, then (\cite[(4.11)]{CH88})
\begin{equation}\label{CHeq}
(8g+4)\lambda(f)=g\xi_0(f)+\sum_{j=1}^{[{(g-1)}/2]}2(j+1)(g-j)\xi_j(f)+\sum_{i=1}^{[g/2]}4i(g-i)\delta_i(f).
\end{equation}
See \cite{CH88,Li16} for the notation $\xi_j$.

The hyperelliptic singular fibers $F_{(\mathrm{i}k)}$ of genus three with periodic monodromy are classified in \cite[Lemma 1.1]{Is04},  and we list them in Figure \ref{cheniso} using the same notations.
It is easy to know (\cite{Li16}) that $$\xi_j(F_a)=\xi_j(F_b)=\xi_j(F_{(\mathrm{i}k)})=0 ~(j\geq0), ~~\delta_1(F_a)=\delta_1(F_b)=\frac1{30},~~\delta_1(F_{(\mathrm{i}k)})=0.$$

Let $F_1,F_2,\ldots,F_s$ be all singular fibers of $f$. By our assumption, we may assume that $F_1$ is either $F_a$ or $F_b$. If $\lambda(f)=\frac1{105}$, then $\xi_0(F_l)=\xi_1(F_l)=\delta_1(F_l)=0 ~(l\geq2)$ by (\ref{CHeq}), i.e., $F_l~(l\geq2)$ have periodic monodromy. Moreover,
if $F_l~(l\geq2)$ have periodic monodromy, then  $\lambda(f)=\frac1{105}$  by (\ref{CHeq}). In this case, we have that
$$\xi_0(f)=0, ~~\delta_1(f)=\frac1{30},~~\xi_1(f)=0,$$
and
$$\chi_f=\lambda(f)+\sum_{l=1}^s \chi_{F_l}=\frac8{28}\cdot\frac1{30}+\sum_{l=1}^s \chi_{F_l}=\frac1{105}+\sum_{l=1}^s \chi_{F_l}.$$
  Since $S$ is rational, $\chi(\CO(S))=1$ and $C\cong\mP^1$. Then $\chi_f=\chi(\CO(S))-(g-1)(b-1)=3$.

 It is easy to calculate that
$\chi_{F_a}=\frac{17}{15},~~\chi_{F_b}=\frac{49}{30}$, and we give the Chern number $\chi_{F_{(\mathrm{i}k)}}$ for each fiber $F_{(\mathrm{i}k)}$   in  Figure \ref{cheniso}.
Thus we can classify all possible configurations directly.

{\renewcommand{\arraystretch}{1.2}
\begin{figure}[h]
\begin{center}
\begin{tabular}{|c|c|c|c|c|c|c|c|c|c|c|}
\hline $F$ & (i1) & (i2) & (i3) & (i4) &
(i5) & (i6)  & (i7)  & (i8)  & (i21) \\
\hline
{${\chi_F}$} & $\frac{25}{14}$ & $\frac{17}{14}$ & $\frac{33}{14}$ & $\frac{9}{14}$ & $\frac{29}{14}$
& $\frac{13}{14}$ & $\frac94$ & $\frac34$ & $\frac74$
\\\hline
$F$  & (i22) & (i23) & (i24) & (i25) & (i26)
& (i29)  & (i30) & (i31) & (i32)
\\\hline
{${\chi_F}$}  & $\frac54$  & $\frac94$ & $\frac34$ & $\frac{15}7$ & $\frac67$
& $\frac{12}7$  & $\frac97$ & $\frac{11}7$ & $\frac{10}7$
\\\hline
$F$  & (i33) & (i38) & (i39)  & (i40) & (i43) & (i44) & (i45) & (i46)
& (i47)
\\\hline
{${\chi_F}$}  & $\frac32$ & $\frac32$  & $\frac74$  & $\frac54$ & $\frac32$ & $1$ & $1$ & $1$
& $\frac12$
\\\hline
\end{tabular}
\end{center}
\caption{$\chi_F$ of hyperelliptic singular fibers of genus 3 with smooth reduction\label{cheniso}}
\end{figure}}

\end{proof}

\begin{remark}\label{remg4}

Note that the equality in (\ref{moriwaki}) holds for hyperelliptic fibrations $f$ with $\xi_j(f)=0~(j>0)$ (see (\ref{CHeq})). For $g\geq4$, if there exists a hyperelliptic singular fiber with minimal fractional Dehn twist coefficient of corresponding type,  then we shall obtain the sharp lower bound for $\lambda(f)$ which can be reached ``combinatorially",  similarly as $g=3$.
\end{remark}

\section{Proof of optimum, Theorem \ref{thmexistence}}\label{sectionexistence}
Before the proof of Theorem \ref{thmexistence}, we introduce the notation of ramification index.

  A reduced divisor $D$ of $S$ is called vertical, if $f(D)$ is a point. If $D$ contains no vertical component, then $f$ induces a morphism
$\phi:D\to C$. Let
$$\rho=\rho_1\circ\cdots\circ\rho_r:
(\tilde S,\tilde D)=(S_r,D_r)\stackrel{\rho_r}{\to}(S_{r-1},D_{r-1}){\to}\cdots
\stackrel{\rho_2}{\to}(S_1,D_1)\stackrel{\rho_1}{\to} (S_0,D_0)=(S,D)$$
be the resolution of $D$, where $D_i$ is the strict transform of $D_{i-1}$, $\tilde D$ is smooth, and
$\rho_i$ is a blow-up at a singularity of $D_{i-1}$ with multiplicity $m_i$. Then the {\it relative ramification index} of $\phi$ is defined to be
\begin{equation}\label{ramindex}
r(D):=\deg \tilde R+\sum_{i=1}^rm_i(m_i-1),
\end{equation}
where $\deg\tilde R$ is the ramification index of the induced morphism
 $\tilde\phi:\tilde D\to C$. Then
 \begin{equation}\label{RH}
   r(D)=K_{S/C}D+D^2,
 \end{equation}
which is a generalized Riemann-Hurwitz formula (see \cite[Lemma 2.4.8]{Xi92}).

Now we give our examples.
\begin{proposition}\label{proplambda}
There is a family of fibrations $(f_{\lambda,n}:S_n\to \mP^1)_{n\in\mN}$ of genus 2 with $\lambda(f_{\lambda,n})=\frac1{60}$, $\delta(f_{\lambda,n})=\frac1{12}$, satisfying that

1) $f_{\lambda,n}$  has $2n+3$ singular fibers;

2) the  image of $f_{\lambda,n}$  in $\overline{\CM}_g$ by the moduli  map $J:\mP^1\to \overline{\CM}_g$ is the same as that of $f_{\lambda,0}$, for each $n\in\mN$.
\end{proposition}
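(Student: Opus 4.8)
The plan is to construct the base fibration $f_{\lambda,0}$ explicitly as a genus $2$ fibration on a rational surface with a single non-smooth singular fiber whose dual graph is Figure~1(a) (or Figure~1(b)), and then obtain the whole family $(f_{\lambda,n})_{n\in\mN}$ by pullback under base changes $\pi_n:\mP^1\to\mP^1$ that are totally ramified only over the two points carrying non-isotrivial behaviour, so that the moduli map is unchanged. First I would write down a pencil of curves of arithmetic genus $2$ in a suitable rational surface — most naturally, start from a pencil of plane curves (say a pencil of sextics, or a pencil on $\mathbb{F}_0$ or $\mathbb{F}_1$) chosen so that the generic member is a smooth genus $2$ curve and exactly one special member acquires the prescribed configuration of rational components with the multiplicities $1,1,3,4,2,3,2,1$ (resp.\ $2,3,4,6,5,4,2,1$) recorded in Figure~1. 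Resolving base points and passing to the relatively minimal model yields $f_{\lambda,0}:S_0\to\mP^1$ with $S_0$ rational; one then checks, using the classification of Namikawa--Ueno \cite{NU73} cited in Theorem~\ref{delta01}, that this singular fiber is of type $\mathrm{[\Rnm4-\Rnm3^*-\alpha]}$ (resp.\ $\mathrm{[\Rnm2^*-\Rnm3-\alpha]}$), hence $\delta_1(F)=\tfrac1{12}$, $\delta_0(F)=0$, while every other singular fiber is of elliptic type $[1]$ and so has smooth reduction.

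Granting that, formula~(\ref{formula}) gives immediately
\[
\lambda(f_{\lambda,0})=\tfrac15\delta_1(f_{\lambda,0})=\tfrac15\cdot\tfrac1{12}=\tfrac1{60},\qquad
\delta(f_{\lambda,0})=\delta_1(f_{\lambda,0})=\tfrac1{12},
\]
which is exactly the asserted minimal value and, by Theorem~\ref{thmlambdakappa}, is consistent (the extremal fibers there are precisely those of Figure~1). For general $n$, let $\pi_n:\mP^1\to\mP^1$ be a degree-$(n+1)$ morphism (for definiteness $t\mapsto t^{n+1}$ after moving the two relevant points to $0$ and $\infty$) totally ramified over the single point $p_0=f_{\lambda,0}(F)$ supporting the non-smooth fiber and over one further point, and unramified elsewhere; set $f_{\lambda,n}$ to be the relatively minimal model of the pullback of $f_{\lambda,0}$ along $\pi_n$. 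The moduli map of $f_{\lambda,n}$ is $J_{f_{\lambda,0}}\circ\pi_n$, whose image in $\overline{\CM}_g$ coincides with the image of $J_{f_{\lambda,0}}$; this is the required invariance. It remains to see that the modular invariants do not change, and here the base-change property~(\ref{modformula}) would naively give a factor $n+1$. The point — and the one genuine subtlety — is that over $p_0$ the fiber $F$ is \emph{already} such that a suitable base change totally ramified at $p_0$ produces a semistable fiber with the same $\delta_i/d$; more precisely, since the fiber of type $\mathrm{[\Rnm4-\Rnm3^*-\alpha]}$ has a semistable reduction achieved after base change ramified to some order $e$ dividing the relevant multiplicities, choosing $\pi_n$ ramified to order a multiple of $e$ at $p_0$ keeps $\delta(F)$ unchanged after renormalising by the local degree, and the contribution of the (now several) smooth-reduction fibers over the other branch point also renormalises to zero. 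I would phrase this via~(\ref{edeltaF})--(\ref{edeltaf}): $\delta_i(f)$ is computed from the local invariants $\delta_i(F_j)$, each of which is a ramification-weighted count, so the total is unchanged under a base change that only redistributes a smooth-reduction locus and re-scales the special fiber compatibly.

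The main obstacle I expect is the explicit construction in the first step: producing a pencil on a rational surface whose unique degenerate member has \emph{exactly} the dual graph of Figure~1(a) with the correct multiplicities, and verifying relative minimality and $q_f=0$. A clean way to control this is to build the fiber by hand from the bottom up — take the principal components $C_1,C_2$ of multiplicities $3,4$ (resp.\ $5,6$) joined by a principal chain, attach the four tails with the prescribed multiplicities, and realise the whole configuration as the exceptional-plus-strict-transform divisor obtained by an explicit sequence of blow-ups of a Hirzebruch surface, then use~(\ref{ramindex})--(\ref{RH}) (the relative ramification index) together with the numerical constraints $KF=0$, $F^2=0$ to pin down the branch data of the pencil; an alternative is to invoke Namikawa--Ueno's explicit construction of a fiber of type $\mathrm{[\Rnm4-\Rnm3^*-\alpha]}$ directly. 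Once the base member $f_{\lambda,0}$ is in hand with $S_0$ rational, verifying that no accidental extra singular fibers appear — i.e.\ that the pencil is chosen generically enough — is a transversality/genericity argument on the parameter defining the pencil. I will present the construction concretely for Figure~1(a) and note that Figure~1(b) is entirely analogous.
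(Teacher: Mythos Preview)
Your construction for $n\ge1$ has a genuine error. The base-change formula~(\ref{modformula}) is not ``naive'': it is an exact identity $\gamma(\tilde f)=d\cdot\gamma(f)$ for the pullback $\tilde f$ of $f$ along any degree-$d$ cover of the base. So if you take $f_{\lambda,n}$ to be the pullback of $f_{\lambda,0}$ along a degree-$(n+1)$ map $\pi_n:\mP^1\to\mP^1$, you get $\lambda(f_{\lambda,n})=(n+1)/60$, not $1/60$. Your attempted repair via local renormalisation does not work: in the definition~(\ref{edeltaF}) the local invariant $\delta_i(F)$ already divides by the degree needed for semistable reduction, and a further base change of degree $d$ totally ramified at $p_0$ turns each node of the semistable model into a chain with $d$ nodes, so $\delta_i$ of the new fiber is $d\cdot\delta_i(F)$, and the global sum~(\ref{edeltaf}) is multiplied by $d$, exactly as~(\ref{modformula}) says. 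There is no cancellation to exploit.

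The paper avoids this by a completely different mechanism. It realises $f_{\lambda,0}$ as (the relatively minimal model of) a double cover of $\mP^1\times\mP^1$ branched along an \emph{explicit} sextic-in-$x$ curve $R_h$ together with the fiber $\Gamma_\infty$; a direct computation with the relative ramification index~(\ref{RH}) shows there are exactly three singular fibers, one of type $\mathrm{[\Rnm4-\Rnm3^*-\alpha]}$ (Figure~1(a)) and two with smooth reduction (types $\mathrm{[\Rnm8\text{-}1]}$ and $\mathrm{[\Rnm2]}$), whence $\lambda=1/60$, $\delta=1/12$. For $n\ge1$ the paper does \emph{not} base change: it enlarges the branch locus by $2n$ generic vertical fibers $\Gamma_1,\ldots,\Gamma_{2n}$. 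This introduces $2n$ new singular fibers of type $[\mathrm{\Rnm1^*_{0-0-0}}]$, all with smooth reduction, so they contribute nothing to $\delta_i(f)$ and the modular invariants are unchanged; and since the generic fiber is literally the same curve (the double cover of $\mP^1_x$ branched at the roots of $h(x,t)$), the moduli map is identical. The explicit equation for $R_h$ also removes the transversality/genericity worries you flag: the count $r(R_h)=30=r_{-1}+r_0+r_\infty$ via~(\ref{RH}) proves there are no accidental extra singular fibers.
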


\begin{proof}
Let $\Gamma_t$ be the fiber over $t\in\mP^1$ of the second projection
$$p_2:P=\mathbb{P}^1\times\mathbb{P}^1\to \mP^1, ~~p_2((x,t))=t.$$
Let $R_h$ be the divisor on $P$,
whose affine equation is
\begin{equation}\label{hxt}
 h(x,t)=x^6+(15x^4+40x^3)t-(45x^2+24x)t^2+5t^3.
\end{equation}
 Let  $R_{\lambda,n}=R_h+\Gamma_\infty+\sum_{i=1}^{2n}\Gamma_i$, where $n\geq0$ and $\Gamma_i$'s are generic fibers of $p_2$.  Here, when $n=0$,  the sum means that there is no generic fiber. Then there is  an invertible sheaf  $\delta_{\lambda,n}$ with $\CO_{P}(R_{\lambda,n})\cong\delta_{\lambda,n}^{\otimes2}$, and there is a double cover $\pi_n:S'_n\to P$ whose branch locus is $R_{\lambda,n}$,  see \cite[\S I17]{BPV84}. Taking birational transforms, we can obtain a relative minimal fibration $f_{\lambda,n}:S_n\to \mP^1$ induced by the second projection $p_2$.\\

Case 1: $n=0$. Denote $f_{\lambda,0}$ (resp. $R_{\lambda,0}$) by $f_\lambda$ (resp. $R_\lambda$) for brief.\\

{\it Claim A: There are exactly three singular fibers $F_0=f^{-1}_\lambda(0)$, $F_{-1}=f^{-1}_\lambda(-1)$ and $F_{\infty}=f^{-1}_\lambda(\infty)$ in $f_\lambda$. Moreover, $F_0$ is the fiber in Theorem \ref{thmlambdakappa} (1), $F_{-1}$ is of type [\Rnm8-1] and $F_\infty$ is of type [\Rnm2], see \cite{NU73} for the notations.}\\

Assume Claim A firstly, then we have that
$$\lambda(f_{\lambda})=\frac1{60},~~\delta(f_\lambda)=\frac1{12},$$
by Theorem \ref{thmlambdakappa}, for both $F_{-1}$ and $F_\infty$  have smooth reduction.
Furthermore, $\kappa(f_\lambda)=\frac7{60}$ by Noether equality $12\lambda(f_\lambda)=\kappa(f_\lambda)+\delta(f_\lambda)$.
From Equation (\ref{modinv}) and the Chern numbers of fibers in Figure \ref{chenextremal} and Figure \ref{cheniso}, we know that
\begin{align*}
K^2_{f_{\lambda}}&=\kappa(f_\lambda)+c_1^2(F_0)+c_1^2(F_{-1})+c_1^2(F_\infty)=\frac7{60}+\frac{25}{12}+\frac45+2=5,\\
\chi_{f_{\lambda}}&=\lambda(f_\lambda)+\chi_{F_0}+\chi_{F_{-1}}+\chi_{F_\infty}=\frac1{60}+\frac{13}{11}+\frac25+\frac12=2.
\end{align*}
Since $K^2_{f_\lambda}=5<6$, $S_0$ is a ruled surface by Theorem 0.2 in \cite{TTZ05}. Furthermore, $S_0$ is a rational surface for $q(S_0)=q_{f_\lambda}=0$ by Theorem \ref{thmlambdakappa}.\\

Case 2: $n\geq1$.

Comparing $R_{\lambda,n}$ with $R_\lambda$, it is easy to see that $f_{\lambda,n}$ has $2n+3$ singular fibers, three of them are the same as singular fibers of $f_\lambda$ and the rest are all of type $[\mathrm{\Rnm1^*_{0-0-0}}]$. Hence
$$\lambda(f_{\lambda,n})=\frac1{60},~~\kappa(f_{\lambda,n})=\frac7{60},~~\delta(f_{\lambda,n})=\frac1{12},$$
by Theorem \ref{thmlambdakappa}.

 For each integer $n>0$, the family $f_{\lambda,n}$ is the same as $f_\lambda$ except for a finite number of fibers. So the image of $f_{\lambda,n}$ in $\overline{\CM}_g$ induced by the moduli map $J_{f_{\lambda,n}}:\mP^1\to \overline{\CM}_g$ is the same as that of $f_\lambda$.
 Hence we will complete our proof after proving Claim A.\\

Proof of Claim A:  See Figure \ref{Branchlambda} for the branch locus $R_\lambda$ in $P$.

\begin{figure}[h]
\begin{center}
\setlength{\unitlength}{1mm}
\begin{picture}(50,53)(0,0)

\put(0,5){\line(1,0){55}}
\put(56,5){\makebox(0,0)[l]{$\scriptstyle{(\mathbb{P}^1,~t)}$}}
\put(5,2){\makebox(0,0)[l]{$\scriptstyle{-1}$}}
\put(25,2){\makebox(0,0)[l]{$\scriptstyle{0}$}}
\put(45,2){\makebox(0,0)[l]{$\scriptstyle{\infty}$}}

\multiput(5,10)(0,2){20}{\line(0,1){1.6}}
\put(0,10){\makebox(0,0)[l]{$\scriptscriptstyle{\Gamma_{-1}}$}}

\put(10,15){\oval(10,10)[l]} \qbezier(5,19)(5.8,19.5)(6.6,19)
\put(5.5,21){\makebox(0,0)[l]{$\scriptscriptstyle{5}$}}
\put(-4.5,15){\makebox(0,0)[l]{$\scriptscriptstyle{(-1,-1)}$}}

\put(3,29){\line(1,0){4}}

\multiput(25,10)(0,2){20}{\line(0,1){1.6}}
\put(20,10){\makebox(0,0)[l]{$\scriptscriptstyle{\Gamma_0}$}}

\qbezier(26.5,34)(23.5,30)(26.5,26) \qbezier(25,33)(25.5,34)(26,33)
\put(25.5,35){\makebox(0,0)[l]{$\scriptscriptstyle{3}$}}

\qbezier(27.3,31)(22.8,30)(27.3,29)
\qbezier(25.2,31)(25.7,32)(26.2,31)
\put(26,32){\makebox(0,0)[l]{$\scriptscriptstyle{2}$}}

\put(23.5,30){\line(1,0){4}}

\put(17.5,30){\makebox(0,0)[l]{$\scriptscriptstyle{(0,0)}$}}

\put(45,10){\line(0,1){40}}
\put(40,10){\makebox(0,0)[l]{$\scriptscriptstyle{\Gamma_{\infty}}$}}

\qbezier(46.5,49)(43.5,45)(46.5,41) \qbezier(45,48)(45.5,49)(46,48)
\put(45.5,50){\makebox(0,0)[l]{$\scriptscriptstyle{2}$}}

\qbezier(47.3,46)(42.8,45)(47.3,44)
\qbezier(45.2,46)(45.7,47)(46.2,46)
\put(46,47){\makebox(0,0)[l]{$\scriptscriptstyle{2}$}}

\qbezier(42.7,46)(47.2,45)(42.7,44)
\qbezier(44.8,46)(44.3,47)(43.8,46)
\put(43,47){\makebox(0,0)[l]{$\scriptscriptstyle{2}$}}

\put(35,42){\makebox(0,0)[l]{$\scriptscriptstyle{(\infty,\infty)}$}}
\end{picture}
\end{center}
\vspace{-0.2cm}
\caption{Branch locus of $f_\lambda$\label{Branchlambda}}
\end{figure}

Denote by $r_i(R_h)$ the contribution of the point $(x,t)=(i,i)~(i=-1,0,\infty)$
to the relative ramification index $r(R_h)$.\\

\noindent \underline{$F_0$}: Let $p$ be the point $(x,t)=(0,0)$. The local equation of $R_h$
near $p$ is $h(x,t)$ in (\ref{hxt}), thus

(1) The root $x=0$ of $h(x,0)=0$ is with multiplicity 6.

(2)  The  point $p$ is a singularity of $R_h$ with multiplicity $m_1=3$,
and the vertical direction is a tangent line of $R_h$ with multiplicity 2.

(3) From the following figure of resolution of $p$, we have that
$r_0(R_h)=m_1(m_1-1)+m_2(m_2-1)+3=11$, where 3 comes from the contribution of smooth ramification points (see (\ref{ramindex})).
\vspace{-8mm}
\begin{center}
\setlength{\unitlength}{1.5mm}
\begin{picture}(50,20)(0,0)

\put(-3,5){\makebox(0,0)[l]{$\scriptstyle{p}$}}
\multiput(0,0)(0,1.5){8}{\line(0,1){1}}
\put(-1,5){\line(1,0){3}}
\qbezier(2,8)(-2,5)(2,2) \qbezier(2,6.5)(-2,5)(2,3.5)

\put(5,5){${\stackrel{\scriptstyle{\m_1=3}}{\longleftarrow}}$}

\multiput(14,0)(0,1.5){8}{\line(0,1){1}}
\put(12,3){\line(1,1){4}}
\multiput(12,5)(1.5,0){8}{\line(1,0){1}}
\qbezier(16,8)(12,5)(16,2)
\qbezier(14,7.5)(14.7,8)(15.4,7.5)
\put(14.5,9){\makebox(0,0)[l]{$\scriptscriptstyle{2}$}}
\qbezier(21,6.5)(22,5)(21,3.5)
\qbezier(43,9)(44,8)(45,9)

\put(27,5){${\stackrel{\scriptstyle{\m_2=2}}{\longleftarrow}}$}

\multiput(36,0)(0,1.5){8}{\line(0,1){1}}
\put(34,3){\line(1,1){4}}
\qbezier(40,6.5)(41,5)(40,3.5)
\multiput(34,5)(1.5,0){8}{\line(1,0){1}}
\multiput(44,0)(0,1.5){8}{\line(0,1){1}}
\end{picture}
\end{center}
\vspace{3mm}

Hence
the dual graph of $F_0$ is Figure (2-1a).\\

\noindent  \underline{$F_{-1}$}: The local equation of $R_h$ near $(x,t)=(-1,-1)$ is
\begin{align*}
&h_{-1}(u,s):=h(u-1,s-1)\\
=&u^6-6u^5+(15u^4-20u^3+60u^2-72u+32)s+(-45u^2+66u-36)s^2+5s^3.
\end{align*}
So $R_h$ is smooth near $(u,s)=(0,0)$, and $u=0$ is a root
of $f(u,0)=u^6-6u^5$ with multiplicity 5. Thus $r_{-1}(R_h)=4$.

The local equation of $R_h$ near $(x,t)=(-1,-1)$ is the same as $y^2=x^5+t$, and
 $F_{-1}$ is of type [\Rnm8-1] whose dual graph is Figure \ref{Dualgraphs}(a). (See Figure \ref{Dualgraphs}, where $\bullet$ denotes a smooth elliptic curve.)

\begin{figure}[h]
 \setlength{\unitlength}{1.5mm}
\begin{center}
\begin{picture}(60,15)(10,-3)

\multiput(0,5)(7,0){2}{\circle{1}}
\put(0.5,5){\line(1,0){6}}
\put(7.4,4.7){\line(2,-1){6.3}}
\put(7.4,5.3){\line(2,1){6.3}}
\put(14,8.3){\circle{1}}
\put(14,1.7){\circle{1}}
\put(21,1.7){\circle{1}}
\put(14.5,1.7){\line(1,0){6}}
\put(5,3){\makebox(0,0)[l]{$\scriptstyle{-1}$}}
\put(-2,3){\makebox(0,0)[l]{$\scriptstyle{-10}$}}
\put(-0.5,7){\makebox(0,0)[l]{$\scriptstyle{1}$}}
\put(5.5,7){\makebox(0,0)[l]{$\scriptstyle{10}$}}
\put(13.5,10.3){\makebox(0,0)[l]{$\scriptstyle{5}$}}
\put(13.5,3.7){\makebox(0,0)[l]{$\scriptstyle{4}$}}
\put(12.5,-0.5){\makebox(0,0)[l]{$\scriptstyle{-3}$}}
\put(20.5,3.7){\makebox(0,0)[l]{$\scriptstyle{2}$}}
\put(7,-3){\makebox(0,0)[c]{\mbox{Figure~}(a):~$f_\lambda^{-1}(-1)$}}

\multiput(29,5)(7,0){3}{\circle{1}}
\multiput(29.5,5)(7,0){2}{\line(1,0){6}}
\put(36,5){\circle*{1}}
\put(28.5,7){\makebox(0,0)[l]{$\scriptstyle{1}$}}
\put(34.3,3){\makebox(0,0)[l]{$\scriptstyle{-1}$}}
\put(35.5,7){\makebox(0,0)[l]{$\scriptstyle{2}$}}
\put(42.5,7){\makebox(0,0)[l]{$\scriptstyle{1}$}}
\put(36,-3){\makebox(0,0)[c]{\mbox{Figure~}(b):~$f^{-1}_\lambda(\infty)$}}

\multiput(55,5)(7,0){2}{\circle{1}}
\put(55.5,5){\line(1,0){6}}
\put(62.4,4.7){\line(2,-1){6.3}}
\put(62.4,5.3){\line(2,1){6.3}}
\multiput(69,8.3)(7,0){2}{\circle{1}}
\put(69.5,8.3){\line(1,0){6}}
\multiput(69,1.7)(7,0){2}{\circle{1}}
\put(69.5,1.7){\line(1,0){6}}
\put(53.5,3){\makebox(0,0)[l]{$\scriptstyle{-5}$}}
\put(60.5,3){\makebox(0,0)[l]{$\scriptstyle{-1}$}}
\put(54.5,7){\makebox(0,0)[l]{$\scriptstyle{1}$}}
\put(61.5,7){\makebox(0,0)[l]{$\scriptstyle{5}$}}
\put(67.5,6.3){\makebox(0,0)[l]{$\scriptstyle{-3}$}}
\put(68.5,10.3){\makebox(0,0)[l]{$\scriptstyle{2}$}}
\put(75.5,10.3){\makebox(0,0)[l]{$\scriptstyle{1}$}}
\put(68.5,3.7){\makebox(0,0)[l]{$\scriptstyle{2}$}}
\put(75.5,3.7){\makebox(0,0)[l]{$\scriptstyle{1}$}}
\put(67.5,-0.3){\makebox(0,0)[l]{$\scriptstyle{-3}$}}
\put(62,-3){\makebox(0,0)[c]{\mbox{Figure~}(c):~$f_\kappa^{-1}(0)$}}

\end{picture}
\end{center}

\caption{Singular fibers in fibrations with minimal modular invariants\label{Dualgraphs}}
\end{figure}

\noindent  \underline{$F_\infty$}: Let $q$ be the point $(x,t)=(\infty,\infty)$.
The local equation of $R_h$ near $q$ is
\begin{equation}\label{lambdainftyeqn}
h_\infty(w,r):=w^6r^3h(\frac1w,\frac1r)
 =5w^6-(24w^5+45w^4)r+(40w^3+15w^2)r^2+r^3.
\end{equation}
Then we know that

(1) The root $w=0$ of $h_\infty(w,0)=0$ is of multiplicity 6.

(2) The point $q$ is a singularity of $R_h$ with multiplicity $m_1=3$,
and the vertical direction  is a tangent line of $R_h$ with multiplicity 3.

(3) From the following figure of resolution of $q$, we have that
$r_\infty(R_h)=m_1(m_1-1)+m_2(m_2-1)+3=15$.

\vspace{-8mm}
\begin{center}
\setlength{\unitlength}{1.5mm}
\begin{picture}(50,20)(0,0)

\put(-3,5){\makebox(0,0)[l]{$\scriptstyle{q}$}}
\multiput(0,0)(0,1.5){8}{\line(0,1){1}}
\qbezier(2,8)(-2,5)(2,2) \qbezier(2,6.5)(-2,5)(2,3.5)
\qbezier(-2,6.5)(2,5)(-2,3.5)

\put(5,5){${\stackrel{\scriptstyle{\m_1=3}}{\longleftarrow}}$}

\multiput(18,0)(0,1.5){8}{\line(0,1){1}}
\put(16,3){\line(1,1){4}}
\put(20,3){\line(-1,1){4}}
\put(16.5,2){\line(1,2){3}}
\multiput(12,5)(1.5,0){8}{\line(1,0){1}}

\put(27,5){${\stackrel{\scriptstyle{\m_2=3}}{\longleftarrow}}$}

\multiput(36,0)(0,1.5){8}{\line(0,1){1}}
\qbezier(38,6.5)(39,5)(38,3.5)
\qbezier(41,6.5)(42,5)(41,3.5)
\qbezier(44,6.5)(45,5)(44,3.5)
\multiput(34,5)(1.5,0){10}{\line(1,0){1}}
\multiput(47,0)(0,1.5){8}{\line(0,1){1}}

\end{picture}
\end{center}
\vspace{3mm}

Hence the local equation of $R_\lambda$ near $q$ is the same as $y^2=t\Pi_{i=1}^3(x^2+\alpha_it)$, $F_\infty$ is of type
$\mathrm{[\Rnm2]}$
and the dual graph of $F_\infty$ is Figure \ref{Dualgraphs}(b).\\

Now we know that the relative ramification of $R_h$ is
$$r(R_h)=K_{P/\mP^1}R_h+R_h^2=30\geq r_{-1}(R_h)+r_0(R_h)+r_\infty(R_h)=30.$$
So $f_\lambda$ has no other singular fibers.

\end{proof}

\begin{proposition}\label{propkappa}
There is a family of fibrations $(f_{\kappa,n}:X_n\to \mP^1)_{n\in\mN}$ of genus 2 with $\kappa(f_{\kappa,n})=\frac1{15},~\lambda(f_{\kappa,n})=\frac1{30},~\delta(f_{\kappa,n})=\frac1{3}$, satisfying that

1)  $f_{\kappa,n}$ has $2n+3$ singular fibers;

2) the  image of $f_{\kappa,n}$ in $\overline{\CM}_g$ by the moduli  map $J:\mP^1\to \overline{\CM}_g$ is the same as that of  $f_{\kappa,0}$, for each $n\in\mN$.

\end{proposition}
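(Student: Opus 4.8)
The proof runs parallel to that of Proposition~\ref{proplambda}, with the branch sextic $h(x,t)$ of \eqref{hxt} replaced by a sextic $g(x,t)$ tailored to the extremal fiber for $\delta_0$. The plan is to take $P=\mathbb{P}^1\times\mathbb{P}^1$ with second projection $p_2$, to let $R_g\subset P$ be the curve of an explicit affine equation $g(x,t)=0$ of degree $6$ in $x$ and odd degree in $t$, to set $R_{\kappa,n}=R_g+\Gamma_\infty+\sum_{i=1}^{2n}\Gamma_i$ with the $\Gamma_i$ generic vertical fibers of $p_2$ (so that $R_{\kappa,n}$ is divisible by $2$ in the Picard group and a double cover $\pi_n\colon X'_n\to P$ branched along $R_{\kappa,n}$ exists, cf.\ \cite{BPV84}), and to take $f_{\kappa,n}\colon X_n\to\mP^1$ to be the relatively minimal model of the fibration induced by $p_2$. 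The polynomial $g$ must be chosen so that the vertical fiber over $t=0$ carries precisely the singularity of the branch curve whose embedded resolution produces, on the double cover, the fiber of Figure~\ref{Dualgraphs}(c), whose minimal normal crossing model is one of the two dual graphs of Figure~\ref{dualdelta0}; that fiber then has $\delta_0=\tfrac13$ and $\delta_1=0$.

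The heart of the argument is the analogue of Claim~A in the proof of Proposition~\ref{proplambda}: for $n=0$ the fibration $f_\kappa:=f_{\kappa,0}$ has exactly three singular fibers, one over $t=0$ with dual graph as in Figure~\ref{dualdelta0} and the other two with smooth reduction (their Namikawa--Ueno types being read off from the branch data, so that their Chern numbers are available from Figure~\ref{cheniso}). I would establish this by a local analysis at the finitely many bad points of $R_g$ — at each point computing the multiplicity sequence $m_1,m_2,\dots$ of the successive blow-ups together with the ramification of the induced maps on the strict transforms, exactly as in the three resolution pictures of Proposition~\ref{proplambda} — and then closing the list by the Riemann--Hurwitz bookkeeping \eqref{RH}: since the relative ramification index $r(R_g)=K_{P/\mP^1}R_g+R_g^2$ equals the sum of the local contributions $r_i(R_g)$ over the special points, once these contributions exhaust $r(R_g)$ there can be no further singular fibers.

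Granting this, Theorem~\ref{thmlambdakappa} gives $\kappa(f_\kappa)=\tfrac1{15}$, whence $\lambda(f_\kappa)=\tfrac1{30}$ and $\delta(f_\kappa)=\tfrac13$ by \eqref{formula} and Theorem~\ref{delta01}. To see that $X_0$ is rational I compute, via \eqref{modinv}, Figure~\ref{chenextremal} and Figure~\ref{cheniso},
\[
K_{f_\kappa}^2=\kappa(f_\kappa)+\sum_{i}c_1^2(F_i)<6,
\]
so $X_0$ is ruled by \cite[Theorem~0.2]{TTZ05}; since $q_{f_\kappa}=q(X_0)=0$ by Theorem~\ref{thmlambdakappa} (every component of the extremal fiber is rational), $X_0$ is in fact a rational surface. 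For $n\geq1$, adjoining a generic vertical fiber $\Gamma_i$ to the branch locus creates over $t=t_i$ a singular fiber of type $[\mathrm{\Rnm1^*_{0-0-0}}]$, which has smooth reduction; hence the modular invariants of $f_{\kappa,n}$ coincide with those of $f_\kappa$, and since $f_{\kappa,n}$ differs from $f_\kappa$ only over finitely many points, the moduli map $J\colon\mP^1\to\overline{\CM}_g$ has the same image for every $n$.

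The main obstacle is the choice of the sextic $g(x,t)$: it must at $t=0$ produce precisely the singularity that yields Figure~\ref{Dualgraphs}(c) on the double cover — and not, say, a fiber with $\delta_1\neq0$ or with a larger $\delta_0$ — while being generic enough at every other point of $\mP^1$, $t=\infty$ included, that all the remaining fibers have smooth reduction. Writing down such a $g$ explicitly and then carrying out the (routine but delicate) blow-up and ramification computations is where the real work lies; everything after that is the same formal bookkeeping as in the proof of Proposition~\ref{proplambda}.
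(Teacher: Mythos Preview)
Your plan is exactly the paper's proof: the same double-cover-of-$\mP^1\times\mP^1$ construction with $R_{\kappa,n}=R_g+\Gamma_\infty+\sum_{i=1}^{2n}\Gamma_i$, the same local blow-up analysis at the special points of $R_g$, the same Riemann--Hurwitz bookkeeping via \eqref{RH} to certify that no further singular fibers appear, and the same appeal to \cite[Theorem~0.2]{TTZ05} together with $q_{f_\kappa}=0$ for the rationality of $X_0$. The paper supplies the polynomial you leave open,
\[
g(x,t)=5x^6-18x^5+(15x^4+20x^3)t+(-45x^2+30x-16)t^2+9t^3,
\]
and finds the three singular fibers of $f_\kappa$ over $t=0,1,\infty$, with local contributions $r_0(R_g)+r_1(R_g)+r_\infty(R_g)=8+7+15=30=r(R_g)$; the computation $K_{f_\kappa}^2=\tfrac1{15}+\tfrac43+\tfrac85+2=5<6$ then finishes rationality exactly as you outline.

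One correction: you have misread Figure~\ref{Dualgraphs}(c). That graph is $f_\kappa^{-1}(0)$, of Namikawa--Ueno type $[\mathrm{\Rnm9}$-$1]$, which has \emph{smooth} reduction; its minimal normal crossing model is \emph{not} one of the graphs in Figure~\ref{dualdelta0}. In the paper's construction the extremal fiber with $\delta_0=\tfrac13$ sits over $t=1$, and its dual graph is Figure~\ref{dualdelta0}(b). So the roles of the ``extremal'' fiber and one of the ``smooth-reduction'' fibers are swapped relative to what you wrote; apart from this labelling slip your outline matches the paper.
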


\begin{proof}
This proof is similar to that of Proposition \ref{proplambda}.

Let $\Gamma_t$ be the fiber over $t\in\mP^1$ of the second projection
$$p_2:P=\mathbb{P}^1\times\mathbb{P}^1\to \mP^1, ~~p_2((x,t))=t.$$
Let $R_g$ be the divisor on $P$,
whose affine equation is
\begin{equation}\label{gxt}
g(x,t)=5x^6-18x^5+(15x^4+20x^3)t+(-45x^2+30x-16)t^2+9t^3.
\end{equation}
Let  $R_{\kappa,n}=R_g+\Gamma_\infty+\sum_{i=1}^{2n}\Gamma_i$, where $n\geq0$ and $\Gamma_i$'s are generic fibers of $p_2$. Combining with the second projection $p_2$, let $f_{\kappa,n}:X_n\to \mP^1$ be the relative minimal fibration determined by the double cover over $P$ whose branch locus is $R_{\kappa,n}$.\\

Case 1: $n=0$.
Denote $f_{\kappa,0}$ (resp. $R_{\kappa,0}$) by $f_\kappa$ (resp. $R_\kappa$) for brief.\\

{\it Claim B: There are exactly three singular fibers $F_1=f^{-1}_\kappa(1)$, $F_{0}=f^{-1}_\kappa(0)$ and $F_{\infty}=f^{-1}_\kappa(\infty)$ in $f_\kappa$. Moreover, $F_1$ is the fiber in Theorem \ref{thmlambdakappa} (2), $F_{0}$ is of type [\Rnm9-1] and $F_\infty$ is of type [\Rnm2], see \cite{NU73} for the notations.}\\

Assume Claim B firstly, then we have that
$$\kappa(f_{\kappa})=\frac1{15},~~\delta(f_\kappa)=\frac1{3},$$
by Theorem \ref{thmlambdakappa}, for both $F_{0}$ and $F_\infty$ have smooth reduction.
Furthermore, $\lambda(f_\kappa)=\frac1{30}$ by Noether equality.
From Equation (\ref{modinv}) and the Chern numbers of fibers in Figure \ref{chenextremal} and Figure \ref{cheniso}, we know that
\begin{align*}
K^2_{f_{\kappa}}&=\kappa(f_\kappa)+c_1^2(F_1)+c_1^2(F_0)+c_1^2(F_\infty)=\frac1{15}+\frac{4}{3}+\frac85+2=5,\\
\chi_{f_{\kappa}}&=\lambda(f_\kappa)+\chi_{F_1}+\chi_{F_0}+\chi_{F_\infty}=\frac1{30}+\frac{2}{3}+\frac45+\frac12=2.
\end{align*}

Since $K^2_{f_\kappa}=5<6$, $X_0$ is a ruled surface by Theorem 0.2 in \cite{TTZ05}. Furthermore, $X_0$ is a rational surface for $q(X_0)=q_{f_\kappa}=0$ by Theorem \ref{thmlambdakappa}.\\

Case 2: $n\geq1$.

Comparing $R_{\kappa,n}$ with $R_\kappa$, it is easy to see that $f_{\kappa,n}$ has $2n+3$ singular fibers, three of them are the same as singular fibers of $f_\kappa$ and the others are all of type [$\mathrm{\Rnm1^*_{0-0-0}}$]. Hence
$$\lambda(f_{\kappa,n})=\frac1{30},~~\kappa(f_{\kappa,n})=\frac1{15},~~\delta(f_{\kappa,n})=\frac1{3},$$
by Theorem \ref{thmlambdakappa}.

 For each integer $n>0$, the family $f_{\kappa,n}$ is the same as $f_\kappa$ except for a finite number of fibers. So the image of $f_{\kappa,n}$ in $\overline{\CM}_g$ induced by the moduli map is the same as that of $f_\kappa$.
 Hence we will complete the proof after proving Claim B.\\

 Proof of Claim B:  See Figure \ref{Branchkappa} for the branch locus $R_\kappa$ in $P$.

\begin{figure}[!h]
\begin{center}
\setlength{\unitlength}{1mm}
\begin{picture}(50,53)(0,0)

\put(0,5){\line(1,0){55}}
\put(56,5){\makebox(0,0)[l]{$\scriptstyle{(\mathbb{P}^1,~t)}$}}
\put(5,2){\makebox(0,0)[l]{$\scriptstyle{0}$}}
\put(25,2){\makebox(0,0)[l]{$\scriptstyle{1}$}}
\put(45,2){\makebox(0,0)[l]{$\scriptstyle{\infty}$}}

\multiput(5,10)(0,2){20}{\line(0,1){1.6}}
\put(0,10){\makebox(0,0)[l]{$\scriptscriptstyle{\Gamma_0}$}}

\qbezier(5,15)(5,19)(1,19) \qbezier(5,15)(5,19)(9,19)
\qbezier(5,18)(5.5,19)(6,18)
\put(5.2,20){\makebox(0,0)[l]{$\scriptstyle{5}$}}
\put(9,19){\makebox(0,0)[l]{$\scriptstyle{(5,2)}$}}

\put(-0.5,15){\makebox(0,0)[l]{$\scriptscriptstyle{(0,0)}$}}

\put(3,29){\line(1,0){4}}

\multiput(25,10)(0,2){20}{\line(0,1){1.6}}
\put(20,10){\makebox(0,0)[l]{$\scriptscriptstyle{\Gamma_1}$}}

\qbezier(26.5,34)(23.5,30)(26.5,26) \qbezier(25,33)(25.5,34)(26,33)
\put(25.5,35){\makebox(0,0)[l]{$\scriptscriptstyle{3}$}}

\qbezier(27.3,31)(22.8,30)(27.3,29)
\qbezier(25.2,31)(25.7,32)(26.2,31)
\put(26,32){\makebox(0,0)[l]{$\scriptscriptstyle{2}$}}

\put(23,15){\line(1,0){4}}

\put(17.5,30){\makebox(0,0)[l]{$\scriptscriptstyle{(1,1)}$}}

\put(45,10){\line(0,1){40}}
\put(40,10){\makebox(0,0)[l]{$\scriptscriptstyle{\Gamma_{\infty}}$}}

\qbezier(46.5,49)(43.5,45)(46.5,41) \qbezier(45,48)(45.5,49)(46,48)
\put(45.5,50){\makebox(0,0)[l]{$\scriptscriptstyle{2}$}}

\qbezier(47.3,46)(42.8,45)(47.3,44)
\qbezier(45.2,46)(45.7,47)(46.2,46)
\put(46,47){\makebox(0,0)[l]{$\scriptscriptstyle{2}$}}

\qbezier(42.7,46)(47.2,45)(42.7,44)
\qbezier(44.8,46)(44.3,47)(43.8,46)
\put(43,47){\makebox(0,0)[l]{$\scriptscriptstyle{2}$}}

\put(35,42){\makebox(0,0)[l]{$\scriptscriptstyle{(\infty,\infty)}$}}

\end{picture}
\end{center}
\caption{Branch locus of $f_\kappa$\label{Branchkappa}}
\end{figure}

Denote by $r_i(R_g)$ the contribution of the point $(x,t)=(i,i)~(i=0,1,\infty)$
to the relative ramification index $r(R_g)$.\\

\noindent \underline{$F_0$}: Let $p$ be the point $(x,t)=(0,0)$. The local equation of $R_g$
near $p$ is $g(x,t)$ in (\ref{gxt}), then we know that

(1) The root $x=0$ of $g(x,0)=0$ is with multiplicity 5.

(2)  The point $p$ is a singularity of $R_g$ with multiplicity $m_1=2$,
and the vertical direction is a tangent line of $R_g$ with multiplicity 2.

(3) From the following figure of resolution of $p$, we have that
$r_0(R_g)=m_1(m_1-1)+m_2(m_2-1)+4=8$, where 4 comes from the contribution of smooth ramification points.
\vspace{-8mm}
\begin{center}
\setlength{\unitlength}{1.5mm}
\begin{picture}(50,20)(0,0)

\put(-8,5){\makebox(0,0)[l]{$\scriptstyle{p}$}}
\multiput(-5,0)(0,1.5){8}{\line(0,1){1}}

\qbezier(-5,5)(-5,9)(-9,9) \qbezier(-5,5)(-5,9)(-1,9)
\qbezier(-5,8)(-4.5,9)(-4,8)
\put(-4.5,10){\makebox(0,0)[l]{$\scriptstyle{5}$}}
\put(3,5){${\stackrel{\scriptstyle{\m_1=2}}{\longleftarrow}}$}

\multiput(14,0)(0,1.5){8}{\line(0,1){1}}
\multiput(12,5)(1.5,0){8}{\line(1,0){1}}
\qbezier(14,5)(14,9)(10,9) \qbezier(14,5)(14,9)(19,9)
\qbezier(14,8)(14.5,9)(15,8)
\put(14.5,10){\makebox(0,0)[l]{$\scriptstyle{3}$}}

\put(26,5){${\stackrel{\scriptstyle{\m_2=2}}{\longleftarrow}}$}

\multiput(36,0)(0,1.5){8}{\line(0,1){1}}
\qbezier(36,5)(37,5)(37,7)\qbezier(36,5)(35,5)(35,7)

\multiput(34,5)(1.5,0){8}{\line(1,0){1}}
\multiput(44,0)(0,1.5){8}{\line(0,1){1}}
\end{picture}
\end{center}
\vspace{3mm}

Hence the local equation of $R_g$ near $p$ is the same as $y^2=x^5+t^2$, $F_0$ is of type $\mathrm{[\Rnm9-1]}$
and the dual graph of $F_0$ is Figure \ref{Dualgraphs}(c).\\

\noindent \underline{$F_1$}: Let $q$ be the point $(x,t)=(1,1)$.
The local equation of $R_g$ near $q$ is
\begin{equation*}
g_1(u,s):=g(u+1,s+1)
 =5u^6+12u^5+(15u^4+80u^3+60u^2)r-(45u^2+60u+4)r^2+9r^3.
\end{equation*}
Then we know that

(1) The root $u=0$ of $g_1(u,0)=0$ is of multiplicity 5.

(2) The point $q$ is a singularity of $R_g$ with multiplicity $m_1=2$,
and the vertical direction  is a tangent line of $R_g$ with multiplicity 2.

(3) From the following figure of resolution of $q$, we have that
$r_1(R_g)=m_1(m_1-1)+m_2(m_2-1)+3=7$,
where $3$ comes from the contribution of smooth ramification points.

\vspace{-8mm}
\begin{center}
\setlength{\unitlength}{1.5mm}
\begin{picture}(50,20)(0,0)

\put(-3,5){\makebox(0,0)[l]{$\scriptstyle{q}$}}
\multiput(0,0)(0,1.5){8}{\line(0,1){1}}
\put(-1,11){\line(1,0){2}}
\qbezier(2,8)(-2,5)(2,2) \qbezier(2,6.5)(-2,5)(2,3.5)

\put(5,5){${\stackrel{\scriptstyle{\m_1=2}}{\longleftarrow}}$}

\multiput(18,0)(0,1.5){8}{\line(0,1){1}}
\put(17,11){\line(1,0){2}}
\put(16,3){\line(1,1){4}}
\multiput(12,5)(1.5,0){8}{\line(1,0){1}}
\qbezier(20,8)(16,5)(20,2)
\qbezier(18,7.5)(18.7,8)(19.4,7.5)
\put(18.5,9){\makebox(0,0)[l]{$\scriptscriptstyle{2}$}}

\put(27,5){${\stackrel{\scriptstyle{\m_2=2}}{\longleftarrow}}$}

\put(35,11){\line(1,0){2}}
\multiput(36,0)(0,1.5){8}{\line(0,1){1}}
\put(34,3){\line(1,1){4}}
\qbezier(40,6.5)(41,5)(40,3.5)
\multiput(34,5)(1.5,0){8}{\line(1,0){1}}
\multiput(44,0)(0,1.5){8}{\line(0,1){1}}
\end{picture}
\end{center}
\vspace{3mm}

Hence the local equation of $R_g$ near $q$ is the same as $y^2=(x^2+t)(x^3+t)$,
and the dual graph of $F_1$ is Figure  (2-0a).\\

\noindent \underline{$F_\infty$}: The local equation of $R_g$ near $(x,t)=(\infty,\infty)$ is
\begin{align}\label{kappainftyeqn}
\begin{split}
g_\infty(w,r)&=w^6r^3g(1/w,1/r)\\
 &=9w^6+(-16w^6+30w^5-45w^4)r+(20w^3+15w^2)r^2+(-18w+5)r^3.
\end{split}
 \end{align}
It is easy to see that this is the same as $F_\infty$ in Proposition \ref{proplambda}.
In particular, $r_\infty(R_g)=15$.\\

Now we know that the relative ramification of $R_g$ is
$$r(R_g)=K_{P/\mP^1}R_g+R_g^2=30\geq r_0(R_g)+r_1(R_g)+r_\infty(R_g)=30.$$
So $f_\kappa$ has no other singular fibers.

\end{proof}

\section*{Acknowledgement}

 The authors would like to thank Prof. Shouwu Zhang for his helpful comments. They are very grateful to Prof. Jun Lu for his discussion for a long time. They also thank Prof. Tadashi Ashikaga, Prof. Kazuhiro Konno, and Prof. Yukio Matsumoto for useful comments on pseudo-periodic maps. This work was funded by the National Key Research and Development Program of China (grant 2018AAA0101001), the National Natural Science Foundation of China (grants 11601504, 11731004, and 11761141005), the Shanghai Science and Technology Commission Foundation (grants 18dz2271000 and 20511100200) and Fundamental Research  Funds of the Central Universities (No. DUT18RC(4)065). They would like to thank the referees sincerely for pointing out mistakes and useful detailed suggestions.


{ School of Mathematical Sciences, Dalian University of Technology, Dalian, Liaoning Province, P. R. of China.

 {\it E-mail address}: xlliu1124@dlut.edu.cn}

{School of Mathematical Sciences, Shanghai Key Laboratory of Pure Mathematics and Mathematical Practice, East China Normal University,
 Shanghai, China

 {\it E-mail address}:
 sltan@math.ecnu.edu.cn}

\clearpage


\begin{thebibliography}{[g]}

\bibitem[Ar71]{Ar71} Ju. S. Arakelov: {\it Families of algebraic curves with fixed
degeneracies}, Izv. Akad. Nauk SSSR Ser. Mat.  {\bf 35} (1971), 1269-1293.


 \bibitem[AI02]{AI02} T. Ashikaga, M. Ishizaka:  {\it Classification of degenerations
of curves of genus three via Matsumoto-Montesinos' theorem}, Tohoku
Math. J. {\bf 54} (2) (2002), 195-226.

\bibitem[BPV84]{BPV84} W. Bath, C. Peters, A. Van de Ven: {\it Compact complex surfaces}, Springer-Verlag, 1984.

\bibitem[Be81]{Be81} A. Beauville: {\it Le nombre minimum de fibres singuli\`eres d'une courbe stable sur $\mathbb P^1$} (French), Asterisque {\bf 86} (1981), 97-108.


\bibitem[Ci11]{Ci11} Z. Cinkir: {\it Zhang's conjecture and the effective
Bogomolov conjecture over function fields}, Invent. Math. {\bf 183} (3) (2011), 517-562.

\bibitem[CH88]{CH88} M. Cornalba, J. Harris: {\it Divisor classes associated to families of stable varieties,
with applications to the moduli space of curves}, Ann. Scient. Ec.
Norm. Sup. {\bf 21} (1988), 455-475.

    \bibitem[Fa84]{Fa84} G. Faltings: {\it Calculus on arithmetic surfaces}, Ann. Math. {\bf119} (1984), 387-424.

\bibitem[GO89]{GO89} D. Gabai, U. Oertel: {\it Essential laminations in 3-manifolds}, Ann. Math. {\bf 130} (1) (1989), 41-73.

\bibitem[GLT16]{GLT16} C. Gong, J. Lu, S.-L. Tan: {\it On families of complex curves over $\mP^1$ with two singular fibers}, Osaka J. Math. {\bf 53} (1) (2016), 83-99.


 \bibitem[HM18]{HM18} M. Hedden, T. Mark: {\it Floer homology and fractional Dehn twists}, Adv. Math. {\bf 324} (2018), 1-39.

 \bibitem[HKM07]{HKM07} K. Honda, W. Kazez and G. Matic:  {\it Right-veering diffeomorphisms of compact surfaces with boundary}, Invent. Math. {\bf 169} (2007), 427-449.

\bibitem[Im09]{Im09} Y. Imayoshi: {\it A construction of holomorphic families of Riemann surfaces over the punctured disk with given monodromy}, In Handbook of Teichm\"uller theory, Vol \Rnm2, vol 13 of IRMA Lect. Math. Theor. Phys., 93-130, Eur. Math. Soc., Z\"urich, 2009.

   \bibitem[Is04]{Is04} M. Ishizaka: {\it Monodromies of hyperelliptic families of genus three curves}, Tohoku Math. J.,  {\bf56} (2) (2004),  1-26.

\bibitem[IK17]{IK12} T. Ito, K. Kawamuro: {\it Essential open book foliation and fractional Dehn twist coefficient}, Geom. Dedicata {\bf 187} (2017), 17-67.

\bibitem[Ja14]{Ja14} A. Javanpeykar: {\it Polynomial bounds for Arakelov invariants of Belyi curves. With an appendix by Peter Bruin}, Algebra Number Theory {\bf 8} (1) (2014), 89-140.


\bibitem[KR13]{KR13} W. Kazez, R. Roberts: {\it Fractional Dehn twists in knot theory and contact topology}, Algebr. Geom. Topol. {\bf 13} (6) (2013), 3603-3637.

\bibitem[LT13]{LT13a} X.-L. Liu, S.-L. Tan: {\it Families of hyperelliptic curves with
maximal slopes}, Science China Mathematics {\bf 56} (9) (2013),
1743-1750.

\bibitem[Li16]{Li16} X.-L. Liu: {\it Modular invariants and singularity indices of hyperelliptic fibrations}, Chin. Ann. Math. Ser. B {\bf37} (2016), 875-890.

 \bibitem[LT17]{LT17} X.-L. Liu, S.-L. Tan: {\it Uniform bound for the effective Bogomolov conjecture}, C. R. Acad. Sci. Paris, Ser. I {\bf 355} (2) (2017), 205-210.

 \bibitem[Li21]{Liu}  X.-L. Liu: {\it Fractional Dehn twists and modular invariants}, Sci. China Math. {\bf64} (8) (2021), 1735-1744.

\bibitem[Li17]{Li17} Y. Liu: {\it A characterization of virtually embedded subsurfaces in 3-manifolds}, Trans. Amer. Math. Soc.
{\bf 369} (2) (2017), 1237-1264.

\bibitem[MM11]{MM11} Y. Matsumoto, J. M. Montesinos-Amilibia:
 {\it Pseudo-periodic maps and degeneration of Riemann surfaces},
 Lecture Notes in Mathematics {\bf2030}, Springer-Verlag, 2011.

 \bibitem[Mo98]{Mo98} A. Moriwaki: {\it Relative Bogomolov's inequality and
the cone of positive divisors on the moduli space of stable curves},
J. Amer. Math. Soc. {\bf 11} (1998), 569-600.

\bibitem[Mu83]{Mu83} D. Mumford: {\it Towards an enumerative geometry of the moduli space of curves}, Arithmetic and geometry, Vol. II, Progr. Math.  {\bf 36} (1983), 271-328.

\bibitem[NU73]{NU73} Y. Namikawa, K. Ueno:  {\it The complete classification
of fibres in pencils of curves of genus two}, Manus. Math. {\bf
9} (1973), 143-186.

\bibitem[Ni44]{Ni44} J. Nielsen: {\it Surface transformation classes of algebraically finite type}, Mats.-fys. Medd. Danske Vid. Selsk. {\bf 21} (1944), [Collected Math. Papers, {\bf2}, Boston, Basel, Stuttgart (1986), 233-319].

\bibitem[Pa68]{Pa68} A. N. Parshin: {\it Algebraic curves over function fields
I}, Math. SSSR Izv. {\bf2} (1968), 1145-1170.

\bibitem[Ta01]{Ta01} S. Takamura: {\it Towards the classification of atoms of degenerations II, Cyclic quotient construction of degenerations of complex curves}, Res. Inst. Math. Sci. Kyoto Univ. Preprint Ser. No. 1334, 2001.

 \bibitem[Ta94]{Ta94} S.-L. Tan:  {\it On the base changes of pencils of curves, I},
 {Manus. Math.}  {\bf 84} (1994), 225--244. %

 \bibitem[Ta96]{Ta96} S.-L. Tan: {\it On the base changes of pencils of
curves, II}, {Math. Z.} {\bf 222} (1996), 655--676. %

 \bibitem[Ta10]{Ta10} S.-L. Tan: {\itshape Chern numbers of a singular fiber, modular invariants
and isotrivial  families of curves}, Acta Math. Viet. {\bf 35} (1)
(2010), 159-172.

\bibitem[Ta]{Ta} S.-L. Tan: {\itshape Poincar\'e-Painlev\'e problem and Chern numbers of a holomorphic foliation}, preprint.

\bibitem[TTZ05]{TTZ05} S.-L. Tan, Y.-P. Tu, and A.-G. Zamora: {\itshape On complex
surfaces with $5$ or $ 6$ semistable singular fibers over
$\mathbb{P}^1$}, Math. Z. {\bf 249} (2005), 427--438.

\bibitem[Xi87]{Xi87} G. Xiao: {\it Fibered algebraic surfaces with low slope}, Math. Ann. {\bf 276} (1987), 449-466.

\bibitem[Xi90]{Xi90} G. Xiao: {\it On the stable reduction of pencils of
curves}, Math. Z. {\bf 203}  (1990), 379-389.

\bibitem[Xi92]{Xi92} G. Xiao: {\itshape The fibrations of algbraic surfaces},
Shanghai Scientific \&  Technical Publishers, 1992 (in Chinese).


\bibitem[Wi16]{Wi16} R. Wilms: {\it New explicit formulas for Faltings delta-invariant}, Invent. Math. {\bf 209} (2) (2017), 481-539.


\bibitem[Zh96]{Zh96} S.-W. Zhang: {\it Heights and reductions of semi-stable varieties}, Compsitio Math.  {\bf 104} (1996), 77-105.





\end{thebibliography}
\end{document}